\newtheorem{theorem}{Theorem}[section]
\newtheorem{lemma}[theorem]{Lemma}
\newtheorem{remark}[theorem]{Remark}
\numberwithin{equation}{section}
\newenvironment{proof}[1][Proof]{\noindent\textit{#1.} }{\hfill \rule{0.5em}{0.5em}}
\newcommand{\R}{\mathbb{R}}
\newcommand{\A}{\mathbb{A}}
\newcommand{\LL}{\mathbb{L}}
\newcommand{\X}{\mathbb{X}}
\newcommand{\F}{\mathbb{F}}
\newcommand{\BUC}{{\rm BUC}}
\renewcommand{\eqref}[1]{(\ref{#1})}
\begin{document}

\title{\textbf{Return-to-home model for short-range human travel}}
	\author{\textsc{Arnaud Ducrot$^{(a)}$, and Pierre Magal$^{(b),}$\thanks{Corresponding author. e-mail: \href{mailto:pierre.magal@u-bordeaux.fr}{pierre.magal@u-bordeaux.fr}}}\\
	{\small \textit{$^{(a)}$ Normandie Univ, UNIHAVRE, LMAH, FR-CNRS-3335, ISCN, 76600 Le Havre, France}} \\
	{\small \textit{$^{(b)}$ Univ. Bordeaux, IMB, UMR 5251, F-33400 Talence, France}} \\
	{\small \textit{CNRS, IMB, UMR 5251, F-33400 Talence, France.}}
	}
\maketitle

\begin{center}
	\textit{This article is dedicated to the memory of Professor Stephen Gourley, a very fine gentleman.}
\end{center}
\begin{abstract}
In this work, we develop a mathematical model to describe the local movement of individuals by taking into account their return to home after a period of travel. We provide a suitable functional framework to handle this system and study the large-time behavior of the solutions. We extend our model by incorporating a colonization process and applying the return to home process to an epidemic.
\end{abstract}

	\noindent \textbf{Keywords:} Return to home process,  Diffusion process; Colonization process; Epidemic model

\section{Introduction}

Understanding human displacement  is very important since it influences the population's dynamic. In light of the recent COVID-19 epidemic outbreak, human travel is critical to understand how a virus spreads at the scale of a city, a country, and the scale of the hearth, see \cite{Ruan, Ruan2017}. Human mobility is also essential to understand and quantify the changes in social behavior.   

The spatial motion of populations is sometimes modeled using Brownian motion and diffusion equations at the population scale. For instance, reaction-diffusion equations are widely used to model the spatial invasion of populations both in ecology and epidemiology. We refer for instance Cantrell and Cosner \cite{Cantrell}, Cantrell, Cosner and Ruan \cite{Cantrell-Cosner-Ruan}, Murray \cite{Murray}, Perthame \cite{Perthame}, Roques \cite{Roques} and the references therein. In particular, the spatial propagation for the solutions of reaction-diffusion equations has been observed and studied in the 30s by Fisher \cite{F}, and Kolmogorov, Petrovski, and Piskunov \cite{KPP}. Diffusion is a good representation of the process of invasion or colonization for humans and animals. Nevertheless, once the population is established, the return to home process (i.e., diffusion combined with a return to home) seems to be more adapted to the daily life of humans. 

This article aims at describing short-distance human mobility. We aim at considering the local human movement at the scale of the city with a special focus on how to model the returning home of humans. The return to home behavior is also very important in ecology, agriculture, fisheries. In agriculture, a return to home model for bees has been proposed and studied in \cite{MWW1}, and \cite{MWW2}. 

Roughly speaking, we can classify the human movement into 1) short-distance movement: working, shopping, and other activities; 2) long-distance movement: intercity travels, plane, train, cars, etc. These considerations have been developed recently in \cite{BHG} \cite{GHB}  \cite{KSZ} \cite{MS}. A global description of the human movement has been proposed (by extending the idea of the Brownian motion) by considering the Lévy flight process. The long-distance movement can be covered by using patch models (see Cosner et al.	\cite{CBCI} for more results about this).

In section 2, we present the return to home model. Section 3 explores some conversations properties of the return to home model. Section 4 presents a semi-explicit formula for the solutions of the return to home model. Section 5 studies the equilibrium distributions. Section 6 develops a functional framework to understand the mild solutions for the return to home model. Section 7 is devoted to an extension  Fisher KPP model with return to home and colonization. By analogy to the Fisher KPP problem, we obtain a monotone semiflow. The last section is devoted to an epidemic model with return to home.

%
%
%
%
%
%
%
%
%
%
\section{Return to Home Model}
In this section we describe a model for the movement of individuals within a city.  To simplify the presentation, we consider a population moving in the whole plane $\R^2$.  Our goal is to focus on the most important processes involved in the model. The presentation would be more complicated for a model in a bounded set taking into account boundary conditions.  

\medskip 
\noindent \textbf{Model for individuals staying at home:}  Let $y \to u(t,y)$ be the distribution of population of individual staying at home at time $t$.  This means that the quantity
$$
\int_{\omega}u(t,y)dy
$$ 
represents the number of individuals staying at home in the sub-region $\omega \subset \R^2$ at time $t$. 

The following equation describes the flux between individuals staying at home at the location $y \in \R^2$ and individuals out of the house 
\begin{equation} \label{2.1}
\partial_t u(t,y)= \underset{\text{Flow of individuals returning home}}{\underbrace{ \alpha \int_{\R^2}v(t,x,y)dx}}- \underset{\text{Flow of individuals leaving home}}{\underbrace{\gamma u(t,y) }}
\end{equation}
where $1/\alpha>0$ is the average time spent by individuals at home, and $1/\beta>0$ is the average time spent by individuals out of their house.

\medskip 
\noindent \textbf{Model for travelers (people who are not staying at home):} Let $x \to v_y(t,x)=v(t,x,y)$ be the distribution of population of individuals  out of the house (called travelers) and originated from their home located at $y \in \R^2$. Here originated from $y \in \R^2$ means that their home is located at the position $y$.  Then the quantity  
$$
\int_{\omega}v(t,x,y)dx
$$ 
is the number of individuals (originated from the home located at $y \in \R^2$) and traveling in the sub-region $\omega \subset \R^2$  at time $t$.  

The following equation describes the flux between individuals staying at home at the location $y$ and the travelers 
\begin{equation} \label{2.2}
	\begin{array}{ll}
	\partial_t v(t,x,y)&= \underset{\text{ Brownian motion}}{\underbrace{  \varepsilon^2 \bigtriangleup_x v(t,x,y)}} -  \underset{\text{Flow of individuals returning home}}{\underbrace{ \alpha \, v(t,x,y)}}\\ \\&+ \underset{\text{Flow of individuals leaving home}}{\underbrace{\gamma \, \rho(x-y) \, u(t,y)}},
	\end{array}
\end{equation}
where $\bigtriangleup_x$ denotes the Laplace operator for the the variable $x=(x_1,x_2)\in\R^2$, that is
$$
\bigtriangleup_x=\partial_{x_1}^2+\partial_{x_2}^2.
$$ 
In equation \eqref{2.2}, $\varepsilon>0$ is the diffusivity of the travelers.  The map $x \to \rho(x-y)$ is a Gaussian distribution representing the location of a house centered at the position $y \in \R^2$. Here by a Gaussian distribution centered at $0$ we mean that the function $\rho$ is given by  
\begin{equation} \label{2.3}
\rho(x_1,x_2)= \dfrac{1}{2 \pi  \sigma^2 } e^{- \dfrac{x_1^2+x_2^2}{2 \sigma^2 } },
\end{equation}
for some $\sigma^2>0$, so that the covariance matrix is given by the diagonal matrix $\sigma^2 I_2$. 
Note also that for all $y\in \R^2$, the translated map $\rho(\cdot-y)$ satisfies 
$$
 \int_{\R^2}   \rho(x-y)dx=1 \text{ and } \int_{\R^2} x \rho(x-y)dx=y.  
$$

\medskip 
\noindent \textbf{Initial condition:} System  \eqref{2.1} and \eqref{2.2} is complemented with some initial distributions 
\begin{equation} \label{2.4}
u(0,y)=u_0(y) \text{ and } v(0,x,y)=v_0(x,y). 
\end{equation}

\medskip

\begin{remark}
Here in the model we have neglected possible convection term describing for instance the transport of individuals from their home to their workplace, shopping mall etc. Such a convection term describes the tendency of individuals to start moving from their home location $y$: 
	\begin{equation*}  
		\begin{array}{ll}
			\partial_t v(t,x,y)&=\varepsilon^2 \bigtriangleup_x v(t,x,y)+ \underset{\text{convection}}{\underbrace{\left( c_y(x) \bigtriangledown v(t,x,y) 	\right)}}
		\\
			\\
			& -  \alpha \, v(t,x,y)+\gamma \, \rho(x-y) \, u(t,y),
		\end{array}	
	\end{equation*}
where $ c_y (x) $ is the travel speed at location $ x $. This speed may depend on the location of the house $ y $ to distinguish the individual's origin in the city.
	\end{remark}

\begin{remark} If we formally replace $u(t,y)$ by $ \dfrac{\alpha}{\gamma}\int_{\R^2}v(t,x,y)dx$ in the $v$-equation (see \eqref{2.2}) of the model, we obtain the following single non local equation  
	
	\begin{equation*}  
	\begin{array}{ll}
		\partial_t v(t,x,y)&=    \varepsilon^2 \bigtriangleup_x v(t,x,y) +  \alpha \, \rho(x-y) \,   \int_{\R^2}v(t,x,y)dx -   \alpha \, v(t,x,y).
	\end{array}
	\end{equation*}

\end{remark}

\section{Remarkable distributions}
\subsection{Total distribution of population in space} 
The total distribution of population originated from a house located at $y \in \R^2$ is 
\begin{equation*}  
w(t,x,y)=v(t,x,y)+u(t,y)  \rho(x-y).
\end{equation*}
By integrating the distribution with respect to $y$ (homes locations), we obtain the total distribution of population in space (including both individuals staying at home or travelers)   
\begin{equation*}  
\overline{w}(t,x)=\int_{\R^2} w(t,x,y) dy . 
\end{equation*}

\subsection{The total distribution of individuals at home}  
The total distribution of individuals at home reads as  
\begin{equation*}
h(t,y)= u(t,y)+\int_{\mathbb R^2}v(t,x,y)dx.  
\end{equation*}
Data representing this distribution is usually available. This is true for example in USA \cite{Data} where such a distribution is known per county (a home is identified to a county).

If we neglect the newborns and the deaths, and people moving house, we can assume that  $h(t,y)=h_0(y)$ is independent of time. Let $y \to h_0(y)$ be  the spatial distribution of individuals with their home located at the position $y$. This means that
$$
\int_{\omega}h_0(y)dy 
$$  
denotes the number of people having their home in the region $\omega \subset \R^2$. 

In that case, we obtain 
\begin{equation} \label{3.1}
h_0(y)=u(t,y)+\int_{\mathbb R^2}v(t,x,y)dx, 
\end{equation}
or equivalently 
\begin{equation} \label{3.2}
\int_{\mathbb R^2}v(t,x,y)dx=h_0(y)-u(t,y),\,\forall t\geq 0. 
\end{equation}
\section{Explicit formula for the solutions}  
By using equations \eqref{2.1} and \eqref{2.2}, we have 
$$
\frac{\partial}{\partial t}h(t,y)=\frac{\partial}{\partial t}\left[u(t,y)+\int_{\mathbb R^2}v(t,x,y)dx\right]=0,
$$
Therefore  the solution \eqref{2.1} and \eqref{2.2} satisfies \eqref{3.1}. 

\medskip 
By replacing the formula \eqref{3.2} in the $u$-equation \eqref{2.1}, we obtain 
\begin{equation*}  
\partial_t u(t,y)=\alpha h_0(y)-(\alpha+\gamma)u(t,y),
\end{equation*}
and we deduce an explicit form for the solutions  
\begin{equation} \label{4.1}
u(t,y)=u_0(y)e^{-(\alpha+\gamma)t}+\frac{\alpha}{\alpha+\gamma}(1-e^{-(\alpha+\gamma)t})h_0(y).
\end{equation}
In order to derive an explicit formula for $v(t,x,y)$ we define the two-dimensional heat kernel $K:(0,\infty)\times\R^2\to \R$ be
\begin{equation*}   
K(t,x)=  \dfrac{1}{4\pi\varepsilon^2 t} e^{-\frac{|x|^2}{4\varepsilon^2t}},
\end{equation*}
and we make the simplifying assumption. 
\begin{lemma} \label{LE4.1}Assume that $\rho(x)$ is a Gaussian distribution \eqref{2.3}. Then there exists $t_0>0$ such that 
	$$
	\rho(x)=K(t_0,x).
	$$
Moreover the convolution of $\rho$ with the kernel $K(t,.)$ satisfies 
	\begin{equation} \label{4.2}
		\int_{\mathbb R^2}K(t,x-z) \rho(z-y)d z =K(t+t_0,x-y).
	\end{equation}
\end{lemma}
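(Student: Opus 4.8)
The plan is to verify both assertions by direct computation with Gaussian kernels. For the first one, I would simply compare $\rho$ with $K(t,\cdot)$: both are radially symmetric Gaussians, so it suffices to match the width parameter and then check that the normalizing constants agree automatically. Writing $K(t,x)=\frac{1}{4\pi\varepsilon^2 t}e^{-|x|^2/(4\varepsilon^2 t)}$ and comparing with \eqref{2.3}, the exponents coincide precisely when $4\varepsilon^2 t=2\sigma^2$, i.e. for $t_0:=\sigma^2/(2\varepsilon^2)>0$; and for this value $\frac{1}{4\pi\varepsilon^2 t_0}=\frac{1}{2\pi\sigma^2}$, so indeed $\rho(x)=K(t_0,x)$.

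For the convolution identity \eqref{4.2}, the key point is the semigroup (Chapman--Kolmogorov) property of the heat kernel, namely $\int_{\R^2}K(t,x-z)K(s,z)\,dz=K(t+s,x)$ for all $t,s>0$. Granting this, the identity follows by substituting $\rho(\cdot-y)=K(t_0,\cdot-y)$ from the first part and using translation invariance of convolution: with the change of variable $z\mapsto z+y$ one reduces $\int_{\R^2}K(t,x-z)\rho(z-y)\,dz$ to $\int_{\R^2}K(t,(x-y)-z')K(t_0,z')\,dz'=K(t+t_0,x-y)$.

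It then remains to establish the semigroup property, which I would do by completing the square in the Gaussian exponent: the integrand is $\frac{1}{(4\pi\varepsilon^2)^2 ts}\exp\bigl(-\frac{|x-z|^2}{4\varepsilon^2 t}-\frac{|z|^2}{4\varepsilon^2 s}\bigr)$, and collecting the $z$-dependent terms produces a quadratic $-\frac{t+s}{4\varepsilon^2 ts}\bigl|z-\frac{s}{t+s}x\bigr|^2-\frac{|x|^2}{4\varepsilon^2(t+s)}$; the $z$-integral is then a standard two-dimensional Gaussian integral equal to $\frac{4\pi\varepsilon^2 ts}{t+s}$, and after simplification one is left with exactly $K(t+s,x)$. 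Alternatively, and perhaps more cleanly, one can note that the Fourier transform satisfies $\widehat{K(t,\cdot)}(\xi)=e^{-\varepsilon^2 t|\xi|^2}$, so that convolution corresponds to multiplying these exponentials and adding the times; or one can invoke that $K$ is the fundamental solution of $\partial_t w=\varepsilon^2\Delta w$ together with uniqueness for the Cauchy problem. None of these steps presents a genuine obstacle --- the content of the lemma is entirely a matter of matching parameters and recalling the heat-semigroup identity; the only thing requiring care is the bookkeeping of the constants $\varepsilon$, $\sigma$, $t_0$ and of the translation by $y$.
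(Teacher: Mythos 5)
Your proposal is correct and follows essentially the same route as the paper: identify $\rho(\cdot)=K(t_0,\cdot)$ with $t_0=\sigma^2/(2\varepsilon^2)$, translate by $y$, and conclude by the semigroup property of the heat kernel. The only difference is that you spell out the details the paper leaves implicit (the explicit value of $t_0$, the matching of normalizing constants, and a verification of the Chapman--Kolmogorov identity), all of which check out.
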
 
\begin{proof} We assume that fact that 
	$$
	\rho(x)= K(t_0,x),
	$$
	we deduce that 
	$$
	\int_{\mathbb R^2}K(t,x-z)   {\rho}(z-y)d z = \int_{\mathbb R^2}K(t,x-y-z)   \widehat{\rho}(z)d z =\int_{\mathbb R^2}K(t,x-y-z)  K(t_0,z)d z 
	$$
	and the result follows by the semigroup property of the diffusion semigroup. 
\end{proof}

\bigskip 
\noindent Define 
\begin{equation*}  
T_{ \varepsilon^2 \bigtriangleup_x-\alpha I }(t)\left( v(.,y) \right) (x)=e^{-\alpha t} \int_{\mathbb R^2} K(t,x-z)  v(z,y)d z,
\end{equation*}
and 
\begin{equation*}  
	T_{ \varepsilon^2 \bigtriangleup_x  }(t)\left( v(.,y) \right) (x)=  \int_{\mathbb R^2} K(t,x-z)  v(z,y)d z,
\end{equation*}
where 
\begin{equation*}   
K(t,x)=  \dfrac{1}{4\pi\varepsilon^2 t} e^{-\frac{|x|^2}{4\varepsilon^2t}}.
\end{equation*}
We obtain 
\begin{equation*}  
	\begin{array}{ll}
	v(t, x,y)& =T_{ \varepsilon^2 \bigtriangleup_x-\alpha I }(t)\left( v_0(.,y) \right) (x)\\
		
	&+ \int_0^t T_{ \varepsilon^2 \bigtriangleup_x-\alpha I }(t-\sigma )\left(  \gamma \rho(.-y) u(\sigma, y) \right) (x) d \sigma.
	\end{array}
\end{equation*}
Next, by  replacing the explicit formula \eqref{4.1} for $u(t,y)$ in the above formula, we obtain 
\begin{equation}  \label{4.3}
v(t,x,y)= \left( v_1+ v_2+v_3 \right)(t,x,y),
\end{equation}
with
\begin{equation}  \label{4.4}
v_1(t,x,y)= e^{-\alpha t} \int_{\R^2} K(t,x-z) v_0(z,y)dz, 
\end{equation}
$$
v_2(t,x,y)= \int_0^t e^{-\alpha (t-\sigma) }v_{ \varepsilon^2 \bigtriangleup_x  }(t-\sigma )\left(  \gamma \rho(.-y) \left[u_0(y)-\frac{\alpha}{\alpha+\gamma} h_0(y) \right]e^{-(\alpha+\gamma) \sigma } \right) (x) d \sigma,
$$
and
$$
v_3(t,x,y)= \int_0^t e^{-\alpha (t-\sigma) }T_{ \varepsilon^2 \bigtriangleup_x  }(t-\sigma )\left(  \gamma \rho(.-y) \frac{\alpha}{\alpha+\gamma}h_0(y) \right) (x) d \sigma. 
$$
By using the above formula for $v_2$, we obtain  
\begin{equation*}
v_2(t,x,y)= \gamma  e^{-\alpha t} \int_0^t e^{-\gamma \sigma }  T_{ \varepsilon^2 \bigtriangleup_x  }(t-\sigma )\left(  \rho(.-y) \right) (x) d \sigma \left[u_0(y)-\frac{\alpha}{\alpha+\gamma} h_0(y) \right],
\end{equation*}
and by using Lemma \ref{LE4.1} and formula \eqref{4.2}, we deduce that 
\begin{equation}  \label{4.5}
v_2(t,x,y)=   e^{-\alpha t} \int_0^t e^{-\gamma \sigma } K(t+t_0-\sigma,x-y) d \sigma \, \gamma  \left[u_0(y)-\frac{\alpha}{\alpha+\gamma} h_0(y) \right].
\end{equation}
By using the expression for $v_3$, we obtain  
 $$
 v_3(t,x,y)= \int_0^t e^{-\alpha (t-\sigma) }K(t+t_0-\sigma,x-y) d \sigma \frac{\gamma \alpha}{\alpha+\gamma}h_0(y). 
 $$
 By using the change of variable $\widehat{\sigma}=t-\sigma$ we obtain 
 \begin{equation} \label{4.6}
 	 v_3(t,x,y)= \int_0^t \alpha e^{-\alpha \widehat{\sigma} }K(\widehat{\sigma}+t_0,x-y) d \widehat{\sigma} \frac{\gamma }{\alpha+\gamma}h_0(y) .
 \end{equation}
Therefore, combining equations \eqref{4.3}-\eqref{4.6}, we obtain an explicit formula for $v(t,x,y)$.

\section{Equilibrium and their stability}
The equilibrium distribution must satisfy the following coupled system of equations 
\begin{equation} \label{5.1}
	0= \alpha \int_{\R^2} \overline{v}(x,y)dx- \gamma \overline{u}(y), 
\end{equation}
and
\begin{equation} \label{5.2}
	0= \varepsilon^2 \bigtriangleup_x \overline{v}(x,y)-  \alpha \, \overline{v}(x,y)+\gamma \, \rho(x-y) \,  \overline{u}(y). 
\end{equation}
The first equation \eqref{5.1} provides 
\begin{equation} \label{5.3}
	\overline{u}(y)=\dfrac{\alpha}{\gamma} \int_{\R^2} \overline{v}(x,y)dx.
\end{equation}
By using in \eqref{5.2} we obtain 
\begin{equation*}
	\overline{v}(x,y)=\left( \alpha I-  \varepsilon^2 \bigtriangleup_x \right)^{-1} \left(  \rho(.-y) \,  \gamma \,  \overline{u}(y) \right) 
\end{equation*}
and since $ \overline{H}(y) $ is independent of $x$, we obtain 
\begin{equation*}
	\overline{v}(x,y)=\left( \alpha I-  \varepsilon^2 \bigtriangleup_x \right)^{-1} \left(    \rho(.-y) \,  \right)  \,  \gamma \, \overline{u}(y). 
\end{equation*}
and 
\begin{equation*}
	\begin{array}{ll}
		\left( \alpha I-  \varepsilon^2 \bigtriangleup_x \right)^{-1} \left( \rho(.-y) \,  \right)(x) & =	 \int_0^{\infty}     e^{-\alpha t} \int_{\R^2}  K(t,x-z)   \rho(z-y) dz dt \vspace{0.2cm}\\
		&= \int_0^{\infty}     e^{-\alpha t} K(t+t_0,x-y)    dt . 
	\end{array}
\end{equation*}
Define  now
$$
\chi(z)= \gamma \, \int_0^{\infty}     e^{-\alpha t} K(t+t_0,z)    dt,
$$
so that we get 
\begin{equation}
	\overline{v}(x,y)= \chi(x-y) \overline{u}(y). 
\end{equation}
\begin{remark}
	We observe that
	$$
 \int_{\R^2}	\chi(x-y)dx =\dfrac{\gamma}{\alpha}. 
	$$
	Therefore by using \eqref{3.1} we obtain 
	$$
	h_0(y)= \left[1+\dfrac{\gamma}{\alpha} \right] \overline{u}(y) \Leftrightarrow \overline{u}(y) =\dfrac{\alpha}{\alpha+ \gamma } 	h_0(y). 
	$$

\end{remark}
\section{Mild solutions for the non-homogeneous equation}

\subsection{A change of variable of some interest}
Recall that the return to home model reads as follows 
\begin{equation} \label{6.1A}
	\left\{
	\begin{array}{lll}
		\partial_t u(t,y)=& \, \, \, \, \alpha \int_{\R^2}v(t,x,y)dx &-\gamma u(t,y), \vspace{0.2cm}\\
		\partial_t v(t,x,y)=  \varepsilon^2 \bigtriangleup_x v(t,x,y)&-\alpha \,  v(t,x,y)&+\gamma \, \rho(x-y) \, u(t,y),
	\end{array}
	\right. 
\end{equation}
In the above formulation of the return to home model $x$ denotes the position of travelers and their home located at $y$. So we use an Eulerian system of coordinate for $x$ is independent of the home location $y$. Instead, we can consider the spatial location 
$$
z=x-y
$$ 
which can be regarded as a Lagrangian system of coordinate for $z$ centered at the home location $y$.

By using this change of variables
$$
w(t,z,y)=v(t,z+y,y) \Leftrightarrow w(t,x-y,y)=v(t,x,y) 
$$ 
we obtain the following system of equations
\begin{equation} \label{6.1B}
	\left\{
	\begin{array}{l}
		\partial_t u(t,y)=-\gamma u(t,y)+ \alpha \int_{\R^2}w(t,z,y)dz, \vspace{0.2cm}\\
		\partial_t w(t,z,y)=  \varepsilon^2 \bigtriangleup_z w(t,z,y)-\alpha \, w(t,z,y)+\gamma \, \rho(z) \, u(t,y).
	\end{array}
	\right. 
\end{equation}

In order understand our choice of Banach spaces, one can observe that the term in \eqref{6.1A}
$$
\gamma \, \rho(x-y) \, u(t,y)
$$ 
which becomes in \eqref{6.1B}
$$
\gamma \, \rho(z) \, u(t,y)
$$ 
This explain our choice of the Banach $Y_2$ is the following. 
\subsection{Some Banach spaces}
We consider $\BUC\left(\R^2\right)$ the space of bounded and uniformly continuous maps  from $\R^2$ to $\R$, which is a Banach space endowed with the supremum norm 
$$
\Vert u \Vert_{\infty}= \sup_{y \in \R^2} \vert u(y)\vert. 
$$
\noindent \textbf{The space $Y_1$:} We define 
$$
Y_1=\BUC \left(\R^2, L^1 \left(\R^2\right) \right),
$$
the space of maps $x \to v(x,.)$ which belongs to $\BUC  \left(\R^2, L^1 \left(\R^2\right) \right)$.  The space $ \BUC  \left(\R^2, L^1 \left(\R^2\right) \right)$ becomes a Banach space when it is endowed with the norm 
$$
\Vert v \Vert_{Y_1}=\sup_{x \in \R} \Vert v(x,.) \Vert _{L^1 \left(\R^2\right) }=\sup_{x \in \R}  \int_{\R^2}  \vert v(x,y)\vert dy . 
$$
\noindent \textbf{The space $Y_2$:} We also define $Y_2$ the space of maps $(x,y)\to  v(x,y)$ such that the function $
w(x,y)=v(x+y,y)$ satisfies 
$$
x\to  w(x,.) \in L^1 \left(\R^2, \BUC \left(\R^2\right) \right).
$$
That is also equivalent to say that $Y_2$ is the space of maps $(x,y)\to  w(x-y,y)$ such that 
$$
x \to w(x,.) \in L^1 \left(\R^2, \BUC \left(\R^2\right) \right). 
$$
Therefore $ Y_2$ is a Banach space endowed with the norm 
$$
\Vert v \Vert_{Y_2}  = \int_{\R^2} \sup_{y \in \R^2}  \vert v(x+y,y)\vert dx . 
$$
First note that the maps in $Y_2$ enjoy the following property.
\begin{lemma} \label{LE6.1} Let $v \in Y_2$ be given. Then the map  
	$$
	y \to \int_{\R^2}    v(x,y) dx .
	$$
	is bounded and uniformly continuous on $\R^2$. 
	\end{lemma}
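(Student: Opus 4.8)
The plan is to reduce the statement to a standard fact about vector-valued (Bochner) integrals. Fix $v \in Y_2$ and set $w(x,y) = v(x+y,y)$, so that by the definition of $Y_2$ the map $x \mapsto w(x,\cdot)$ belongs to the Bochner space $L^1\left(\R^2, \BUC(\R^2)\right)$ and $\|v\|_{Y_2} = \int_{\R^2} \|w(x,\cdot)\|_\infty\, \d x$. For each fixed $y$, the translation $x \mapsto x+y$ preserves Lebesgue measure, hence
$$
\int_{\R^2} v(x,y)\, \d x = \int_{\R^2} v(x+y,y)\, \d x = \int_{\R^2} w(x,y)\, \d x =: \phi(y),
$$
and this integral is absolutely convergent because $|w(x,y)| \le \|w(x,\cdot)\|_\infty =: g(x)$ with $g \in L^1(\R^2)$, while measurability of $x \mapsto w(x,y)$ follows from strong measurability of $x \mapsto w(x,\cdot)$ composed with the bounded linear evaluation functional at $y$. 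It therefore suffices to prove $\phi \in \BUC(\R^2)$.

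Next I would interpret $\phi$ itself as a vector-valued integral. Since $x \mapsto w(x,\cdot)$ is strongly measurable with values in the Banach space $\BUC(\R^2)$ and $\int_{\R^2} \|w(x,\cdot)\|_\infty\, \d x < \infty$, the Bochner integral $W := \int_{\R^2} w(x,\cdot)\, \d x$ is a well-defined element of $\BUC(\R^2)$ with $\|W\|_\infty \le \int_{\R^2} \|w(x,\cdot)\|_\infty\, \d x = \|v\|_{Y_2}$. For each $y$ the evaluation map $\mathrm{ev}_y : \BUC(\R^2) \to \R$ is linear and bounded of norm at most $1$; since Bochner integration commutes with bounded linear operators, $\mathrm{ev}_y(W) = \int_{\R^2} \mathrm{ev}_y\left(w(x,\cdot)\right) \d x = \int_{\R^2} w(x,y)\, \d x = \phi(y)$. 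Hence $\phi = W \in \BUC(\R^2)$ and $\|\phi\|_\infty \le \|v\|_{Y_2}$, which is the claim.

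If one prefers to avoid invoking Bochner integration theory, the same conclusion follows from a density argument: step functions $s(x) = \sum_{i=1}^n \mathbf{1}_{A_i}(x)\, g_i$ with $g_i \in \BUC(\R^2)$ and $|A_i| < \infty$ are dense in $L^1\left(\R^2, \BUC(\R^2)\right)$, so given $\eta > 0$ one picks such an $s$ with $\int_{\R^2} \|w(x,\cdot) - s(x)\|_\infty\, \d x < \eta$. Then $y \mapsto \int_{\R^2} s(x,y)\, \d x = \sum_{i=1}^n |A_i|\, g_i(y)$ lies in $\BUC(\R^2)$ as a finite combination of $\BUC$ functions, and $\sup_{y \in \R^2} \left| \phi(y) - \int_{\R^2} s(x,y)\, \d x \right| \le \int_{\R^2}\|w(x,\cdot)-s(x)\|_\infty\, \d x < \eta$. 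Thus $\phi$ is a uniform limit of $\BUC$ functions, hence itself in $\BUC(\R^2)$.

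The only points requiring care are bookkeeping ones: checking that $x \mapsto w(x,y)$ is genuinely Lebesgue measurable and integrable for each fixed $y$ (this is where strong measurability built into the definition of $Y_2$ is used, together with domination by $g$), and that the change of variables $x \mapsto x+y$ is harmless. There is no real analytic obstacle here; the content of the lemma is simply the observation that the pointwise-in-$y$ integral $\int_{\R^2} v(x,y)\, \d x$ is the evaluation of a $\BUC(\R^2)$-valued integral, for which uniform continuity and the bound by $\|v\|_{Y_2}$ are automatic.
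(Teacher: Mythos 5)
Your proof is correct and follows essentially the same route as the paper's: the change of variables $x \mapsto x+y$ reduces the claim to showing that $y \mapsto \int_{\R^2} w(x,y)\,\d x$ is in $\BUC(\R^2)$, which the paper asserts ``by construction'' and which you justify carefully via the Bochner integral in $\BUC(\R^2)$ (or the equivalent density argument). Your write-up simply makes explicit the vector-valued integration facts that the paper leaves implicit, including the bound $\|\phi\|_\infty \leq \Vert v \Vert_{Y_2}$.
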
 
\begin{proof}
	By construction the map 
	$$ 
		y \to \int_{\R^2}  v(x+y,y) dx 
	$$
is bounded and uniformly continuous. But by using a change of variable we deduce that 
		$$
		\int_{\R^2}   v(x+y,y) dx=\int_{\R^2} v(x',y) dx',
		$$
		hence the map 
		$$
		y \to  \int_{\R^2}   v(x',y) dx' 
		$$
			is bounded and uniformly continuous. 
\end{proof}
\subsection{The semigroup  of the heat equation in $\BUC(\R^2,Z)$}

Let $(Z, \Vert . \Vert _Z ) $ be a Banach space.  Let us consider the semigroup generated by the heat equation on $Y=L^1(\R^2,Z)$ given by
$$
T(t)\varphi(x)=\int_{\R^2} K(t,x-x')\varphi(x')dx',\;\forall \varphi\in \BUC(\R^2,Z).
$$
Here recall that the function $K$ is the two-dimensional heat kernel.
Now we will prove the following lemma. We refer to \cite{Haase} for more results on vectored valued elliptic operators and related evolution problems. 
\begin{lemma}\label{LE6.2}
	The semigroup $\{T(t)\}$ is strongly continuous in $Y=\BUC \left(\R^2, Z \right)$.
\end{lemma}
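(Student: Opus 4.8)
The plan is to verify that $\{T(t)\}_{t\ge 0}$ is a $C_0$-semigroup on $Y=\BUC(\R^2,Z)$ by checking, in order: (i) each $T(t)$ is a bounded operator on $Y$; (ii) $T(0)=I$ and $T(t+s)=T(t)T(s)$; (iii) $\|T(t)\varphi-\varphi\|_Y\to 0$ as $t\to 0^+$ for every $\varphi$; and finally (iv) upgrading (iii) to continuity of $t\mapsto T(t)\varphi$ on all of $[0,\infty)$. Throughout I would use only that $K(t,\cdot)$ is a nonnegative scalar function with unit mass, $\int_{\R^2}K(t,z)\,\d z=1$, which lets every estimate be reduced to the scalar case by moving $\|\cdot\|_Z$ inside the Bochner integrals via the triangle inequality.

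For (i), I would note that for $\varphi\in\BUC(\R^2,Z)$ the map $x'\mapsto K(t,x-x')\varphi(x')$ is continuous and dominated by the integrable function $K(t,x-\cdot)\,\|\varphi\|_Y$, so $T(t)\varphi(x)$ is a well-defined element of $Z$ with $\|T(t)\varphi(x)\|_Z\le\|\varphi\|_Y$. To see that $T(t)\varphi$ is again uniformly continuous, I would change variables to write $T(t)\varphi(x+h)-T(t)\varphi(x)=\int_{\R^2}K(t,z)\,[\varphi(x+h-z)-\varphi(x-z)]\,\d z$, so that $\|T(t)\varphi(x+h)-T(t)\varphi(x)\|_Z\le\omega_\varphi(|h|)$, where $\omega_\varphi(r)=\sup_{|h|\le r}\sup_{\xi}\|\varphi(\xi+h)-\varphi(\xi)\|_Z$ is the modulus of uniform continuity of $\varphi$; since $\omega_\varphi(r)\to0$ as $r\to 0$, this gives $T(t)\varphi\in Y$ and $\|T(t)\|\le1$. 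For (ii), $T(0)=I$ is clear and $T(t+s)=T(t)T(s)$ follows from the convolution semigroup identity $K(t+s,\cdot)=K(t,\cdot)\ast K(s,\cdot)$ for the heat kernel — the same property already used in Lemma \ref{LE4.1} — together with Fubini's theorem, justified by the finiteness of the double integral of $K(t,\cdot)K(s,\cdot)\|\varphi\|_Y$.

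The substantive step is (iii). Using $\int_{\R^2}K(t,z)\,\d z=1$ and the change of variables $z=x-x'$, I would write
$$
T(t)\varphi(x)-\varphi(x)=\int_{\R^2}K(t,z)\,\big[\varphi(x-z)-\varphi(x)\big]\,\d z ,
$$
whence $\|T(t)\varphi-\varphi\|_Y\le\int_{\R^2}K(t,z)\,\omega_\varphi(|z|)\,\d z$, a bound uniform in $x$. Splitting this integral at a radius $\delta>0$ bounds it by $\omega_\varphi(\delta)+2\|\varphi\|_Y\int_{|z|>\delta}K(t,z)\,\d z$. Given $\epsilon>0$, I would first fix $\delta$ so small that $\omega_\varphi(\delta)<\epsilon/2$ (possible since $\varphi\in\BUC$); then, since the scaling $z=2\varepsilon\sqrt{t}\,w$ turns $\int_{|z|>\delta}K(t,z)\,\d z$ into $\pi^{-1}\int_{|w|>\delta/(2\varepsilon\sqrt t)}e^{-|w|^2}\,\d w$, which tends to $0$ as $t\to0^+$, I would choose $t$ small enough to make the second term $<\epsilon/2$. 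For (iv), the identities $T(t+h)\varphi-T(t)\varphi=T(t)\,(T(h)\varphi-\varphi)$ and $T(t)\varphi-T(t-h)\varphi=T(t-h)\,(T(h)\varphi-\varphi)$ combined with $\|T(\cdot)\|\le1$ then promote the limit in (iii) to continuity of $t\mapsto T(t)\varphi$ on $[0,\infty)$.

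I do not anticipate a genuine obstacle: the vector-valued target $Z$ never really enters, since the scalar kernel $K$ controls all the estimates and its only interaction with $Z$ is through the triangle inequality for Bochner integrals. The only points requiring a little care are the Bochner measurability/integrability bookkeeping in step (i) and the justification of Fubini in step (ii); the structural facts that make everything work are precisely the two already invoked around Lemma \ref{LE4.1}, namely that $K$ forms a convolution semigroup and is an approximate identity.
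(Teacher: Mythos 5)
Your proof is correct and follows essentially the same route as the paper: the key step is writing $T(t)\varphi(x)-\varphi(x)=\int_{\R^2}K(t,z)\bigl[\varphi(x-z)-\varphi(x)\bigr]\,\d z$, splitting the integral at a small radius using the uniform continuity of $\varphi$, and letting the tail mass of the heat kernel vanish as $t\to0^+$, which is exactly the paper's argument. Your additional verifications (that $T(t)$ maps $\BUC(\R^2,Z)$ into itself, the semigroup law, and the upgrade from continuity at $0$ to continuity on $[0,\infty)$) are routine points the paper leaves implicit, so no substantive difference.
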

\begin{proof}
	First observe that
	$$
	\|T(t)\varphi\|_Y\leq \|\varphi\|_Y,\;\forall t>0,\;\forall \varphi\in Y.
	$$
	To prove the strong continuity of $T(t)$ we fix $\varphi\in Y$. We have 
	$$
	T(t)\varphi(x)-\varphi(x)=\int_{\R^2} K(t,x-x')\varphi(x')dx'-\varphi(x)=\int_{\R^2} K(t,z) \left[ \varphi(x-z)-\varphi(x) \right] dz. 
	$$
Let $\varepsilon>0$ be given. By using the fact that $x \to \varphi(x)$ is uniformly continuous, we deduce that there exists $\eta>0$ such that 
$$
\Vert \varphi(x-z)-\varphi(x) \Vert \leq \varepsilon/2, \forall x,z \in \R^2, \text{ whenever } \vert z \vert \leq \eta. 
$$
and we have 
		\begin{equation*}
		\begin{split}
			\|T(t)(\phi)(x)-\phi(x)\|_Z\leq &\int_{|z|\leq\eta} K(t,z)\left\|\phi(x-z)-\phi(x)\right\|_{Z}dz\\&+\int_{|z|\geq\eta} K(t,z)\left\|\phi(x-z)-\phi(x)\right\|_{Z}dz,
		\end{split}
	\end{equation*}
hence 
$$
	\Vert T(t)\phi(x)-\phi(x)\Vert_Z \leq \varepsilon/2  +2 \Vert \varphi \Vert_Z  \int_{|z|\geq\eta} K(t,z) dz,
$$
and since 
	$$
 \lim_{ t \to 0}\int_{|z|\geq\eta} K(t,z)=0, 
$$
the result follows. 
\end{proof}

\subsection{The semigroup  of the heat equation in $L^1(\R^2,Z)$ }

Let $(Z, \Vert . \Vert _Z ) $ be a Banach space.  We consider the heat semigroup on $Y=L^1(\R^2,Z)$ given by
$$
T(t)\varphi(x)=\int_{\R^2} K(t,x-x')\varphi(x')dx',\;\forall \varphi\in L^1(\R^2,Z).
$$
We have the following lemma. 
\begin{lemma} \label{LE6.3}
	The semigroup $\{T(t)\}$ is strongly continuous in $Y=L^1(\R^2,Z)$.
\end{lemma}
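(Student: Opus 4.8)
The plan is to adapt the argument of Lemma \ref{LE6.2}, replacing the uniform continuity of $x\mapsto\varphi(x)$ (which is meaningless in the $L^1$ setting) by the continuity of translations in $Y=L^1(\R^2,Z)$. First I would record that $T(t)$ is a well-defined contraction on $Y$: since $x'\mapsto K(t,x-x')\varphi(x')$ is Bochner integrable for a.e.\ $x$ (by Tonelli, using $\int_{\R^2}\int_{\R^2}K(t,x-x')\Vert\varphi(x')\Vert_Z\,dx'\,dx=\Vert\varphi\Vert_Y<\infty$), the formula defining $T(t)\varphi$ makes sense, and the triangle inequality for the Bochner integral together with $\int_{\R^2}K(t,z)\,dz=1$ gives
$$
\Vert T(t)\varphi\Vert_Y=\int_{\R^2}\Bigl\Vert\int_{\R^2}K(t,x-x')\varphi(x')\,dx'\Bigr\Vert_Z dx\leq\int_{\R^2}K(t,z)\,\Vert\varphi(\cdot-z)\Vert_Y\,dz=\Vert\varphi\Vert_Y,
$$
for all $t>0$ and $\varphi\in Y$; in particular $\Vert T(t)\Vert_{\mathcal L(Y)}\leq1$. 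The semigroup identity $T(t+s)=T(t)T(s)$ follows from the Chapman--Kolmogorov relation $K(t+s,\cdot)=K(t,\cdot)\ast K(s,\cdot)$ (the same relation invoked in Lemma \ref{LE4.1}) and Fubini's theorem.

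Next I would establish that translations act strongly continuously on $Y$, i.e.\ $\Vert\tau_z\varphi-\varphi\Vert_Y\to0$ as $z\to0$, where $(\tau_z\varphi)(x)=\varphi(x-z)$. This is the one structural fact about the Bochner space $L^1(\R^2,Z)$ that is needed. It follows by density: finite linear combinations of functions $g(\cdot)\zeta$ with $g\in C_c(\R^2)$ and $\zeta\in Z$ are dense in $L^1(\R^2,Z)$; for such elementary functions translation continuity reduces to the classical scalar statement $\Vert\tau_z g-g\Vert_{L^1(\R^2)}\to0$; and the uniform bound $\Vert\tau_z\Vert_{\mathcal L(Y)}=1$ lets one pass to the limit over the dense set.

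Finally, writing $T(t)\varphi=\int_{\R^2}K(t,z)\,\tau_z\varphi\,dz$ as a $Y$-valued Bochner integral (the integrand $z\mapsto K(t,z)\tau_z\varphi$ is continuous, hence strongly measurable, and has integrable norm $\int_{\R^2}K(t,z)\Vert\varphi\Vert_Y\,dz=\Vert\varphi\Vert_Y$), the triangle inequality yields, for any $\eta>0$,
$$
\Vert T(t)\varphi-\varphi\Vert_Y\leq\int_{\R^2}K(t,z)\,\Vert\tau_z\varphi-\varphi\Vert_Y\,dz\leq\sup_{|z|\leq\eta}\Vert\tau_z\varphi-\varphi\Vert_Y+2\Vert\varphi\Vert_Y\int_{|z|\geq\eta}K(t,z)\,dz.
$$
Given $\epsilon>0$, I would first fix $\eta$ so that the first term is $\leq\epsilon/2$ by the previous step, then use $\int_{|z|\geq\eta}K(t,z)\,dz\to0$ as $t\to0^+$ to make the second term $\leq\epsilon/2$; this gives $\lim_{t\to0^+}\Vert T(t)\varphi-\varphi\Vert_Y=0$, and strong continuity on $(0,\infty)$ then follows from the semigroup property and the contraction bound.

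The hard part is really the density statement underpinning translation continuity in $L^1(\R^2,Z)$: for a general Banach space $Z$ one cannot quote scalar $L^1$ theory directly but must go through the construction of the Bochner integral (simple functions are dense, and indicators of finite-measure sets are approximated in $L^1(\R^2)$ by functions in $C_c(\R^2)$ via regularity of Lebesgue measure). Everything after that is a routine transcription of the proof of Lemma \ref{LE6.2}, with the modulus $\sup_{|z|\leq\eta}\Vert\tau_z\varphi-\varphi\Vert_Y$ playing the role formerly played by the modulus of uniform continuity of $\varphi$.
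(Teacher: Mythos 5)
Your proof is correct and rests on the same ingredients as the paper's: the contraction bound, density of compactly supported (elementary) functions in $L^1(\R^2,Z)$, and the splitting of the heat kernel into $|z|\leq\eta$ and $|z|\geq\eta$ with $\int_{|z|\geq\eta}K(t,z)\,dz\to0$. The only difference is organizational — you package the density step as strong continuity of translations on $Y$ and then run the kernel estimate directly, whereas the paper first replaces $\varphi$ by a compactly supported approximant $\phi$ and reapplies the argument of Lemma \ref{LE6.2} to $\phi$ — so this is essentially the same approach.
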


\begin{proof}
	First observe that
	$$
	\|T(t)\varphi\|_Y\leq \|\varphi\|_Y,\;\forall t>0,\;\forall \varphi\in Y.
	$$
	To prove the strong continuity of $T(t)$ we fix $\varphi\in Y$.
	Next fix $\epsilon>0$ and note that there exists $\phi\in C_c(\R^2; Z)$ (compactly supported) such that
	$$
	\int_{\R^2} \|\varphi(x)-\phi(x)\|_Z dx\leq \varepsilon/4.
	$$
	Next one has for all $t>0$
	$$
	T(t)\varphi-\varphi=T(t)(\varphi-\phi)-(\varphi-\phi)+\left[T(t)\phi-\phi\right],
	$$
	and
	$$
	\|T(t)\varphi-\varphi\|_Y  \leq \varepsilon/2+\left\|T(t)\phi-\phi\right\|_Y,
	$$
	and the result follows by using similar argument than in the proof of Lemma \ref{LE6.2}. 
\end{proof}

Now assume that $v \in Y_2$. Then $v$ can be written as 
$$
v(x,y)=  w(x-y,y)
$$  
for some  
$$
x\to  w(x,.) \in L^1 \left(\R^2, \BUC \left(\R^2\right) \right).
$$
We obtain that 
$$
\begin{array}{ll}
T(t)v(x,y)&=\int_{\R^2} K(t,x-x')v(x',y)dx', \vspace{0.2cm}\\
&=\int_{\R^2} K(t,x-x')w(x'-y,y)dx', \vspace{0.2cm}  \\
&=\int_{\R^2} K(t,x-z-y))w(z,y)dz,
\end{array}
$$
therefore 
$$
T(t)v(x+y,y)=\int_{\R^2} K(t,x-z))w(z,y)dz,
$$

$$
\Vert T(t)v \Vert_{Y_2}  = \int_{\R^2} \sup_{y \in \R}  \vert T(t)v(x+y,y) \vert dx = \Vert T(t)w  \Vert_{L^1(\R^2, \BUC \left( \R^2 \right))}  . 
$$
So by using the same argument than in the proof of Lemma \ref{LE6.3} combine with the above observations, we deduce the following lemma. 
\begin{lemma} \label{LE6.4} The semigroup $\{T(t)\}$ is strongly continuous in $Y_2$.
\end{lemma}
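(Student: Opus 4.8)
The plan is to reduce the strong continuity of $\{T(t)\}$ on $Y_2$ to the already-established strong continuity on $L^1(\R^2,\BUC(\R^2))$ (Lemma \ref{LE6.3}), via the linear change of variables that defines $Y_2$. First I would set up the isometry: given $v \in Y_2$, write $v(x,y) = w(x-y,y)$ with $w$ characterized by $x \mapsto w(x,\cdot) \in L^1(\R^2,\BUC(\R^2))$, and recall from the computation preceding the lemma that
$$
T(t)v(x+y,y) = \int_{\R^2} K(t,x-z)\, w(z,y)\, dz = \bigl(T(t)w\bigr)(x,y),
$$
so that $\|T(t)v\|_{Y_2} = \|T(t)w\|_{L^1(\R^2,\BUC(\R^2))}$. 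In particular the map $v \mapsto w$ is a linear isometric bijection from $Y_2$ onto $L^1(\R^2,\BUC(\R^2))$ that intertwines the heat semigroup on $Y_2$ with the heat semigroup on $L^1(\R^2,\BUC(\R^2))$.

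Next I would transfer the two ingredients needed for strong continuity. Contractivity is immediate: $\|T(t)v\|_{Y_2} = \|T(t)w\|_{L^1(\R^2,\BUC(\R^2))} \leq \|w\|_{L^1(\R^2,\BUC(\R^2))} = \|v\|_{Y_2}$ for all $t>0$. For the limit as $t \to 0^+$, fix $v \in Y_2$ and let $w$ be its image; then
$$
\|T(t)v - v\|_{Y_2} = \|T(t)w - w\|_{L^1(\R^2,\BUC(\R^2))} \xrightarrow[t\to 0^+]{} 0
$$
by Lemma \ref{LE6.3} applied with $Z = \BUC(\R^2)$. The semigroup property $T(t+s) = T(t)T(s)$ on $Y_2$ likewise transfers through the intertwining identity (or follows directly from the semigroup property of the heat kernel $K$), and $T(0) = I$ is clear. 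This establishes that $\{T(t)\}$ is a $C_0$-semigroup on $Y_2$.

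The only genuine point requiring care — and the step I would treat as the main obstacle — is verifying that the change of variables $v \mapsto w$ really does map $Y_2$ onto all of $L^1(\R^2,\BUC(\R^2))$ and is an isometry in both directions, i.e. that the definition of $Y_2$ and its norm are internally consistent with this identification; once that is in hand, everything else is a routine transfer. One should also check that $T(t)$ maps $Y_2$ into itself, but this is exactly the content of the displayed identity $T(t)v(x+y,y) = (T(t)w)(x,y)$ together with the fact that $T(t)w \in L^1(\R^2,\BUC(\R^2))$ whenever $w$ is (smoothing in the $\BUC$-variable is automatic, integrability in the first variable is preserved by convolution with the probability kernel $K(t,\cdot)$). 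Hence no new estimate beyond Lemma \ref{LE6.3} is needed, and the proof is short.
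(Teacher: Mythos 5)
Your proposal is correct and follows essentially the same route as the paper: the computation preceding the lemma shows that the change of variables $v(x,y)=w(x-y,y)$ intertwines $T(t)$ on $Y_2$ with the heat semigroup on $L^1\left(\R^2,\BUC\left(\R^2\right)\right)$ isometrically, and strong continuity is then transferred from Lemma \ref{LE6.3} with $Z=\BUC\left(\R^2\right)$. Nothing in your argument departs from the paper's intended proof.
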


\subsection{Existence of a linear semigroup}
Let us consider a non-autonomous perturbation of the model return to home model 
\begin{equation} \label{6.1}
	\left\{
	\begin{array}{lll}
		\partial_t u(t,y)&= &-\gamma u(t,y),\vspace{0.2cm}\\
			\partial_t v(t,x,y)&=  \varepsilon^2 \bigtriangleup_x v(t,x,y)&-\alpha v(t,x,y),
	\end{array}
	 \right. 
\end{equation}
with the initial distribution 
\begin{equation} \label{6.3}
	\left\{ 
	\begin{array}{ll}
		u(0,y)=u_0(y) \in \BUC \left( \R^2\right),\vspace{0.1cm} \\
		\text{and} \vspace{0.1cm}\\
		 v(0,x,y)=v_0(x,y) \in Y_1 \cap Y_2.
	\end{array}
\right.
\end{equation}
We define the state space
$$
X=\BUC\left(\R^2\right) \times  Y_1 \cap Y_2,
$$
which becomes a Banach space when it is endowed with the standard product norm $\|\cdot\|_X$ given by
$$
\Vert (u,v)\Vert_X = \Vert u \Vert_{\infty}+ \Vert v \Vert_{Y_1}+\Vert v \Vert_{Y_2}. 
$$
Then we the following lemma holds.
\begin{lemma} The family of linear operator defined for $t\geq 0$ by 
\begin{equation}\label{6.5}
T_A(t) \left( 
\begin{array}{c}
	u\\
	v
\end{array}
\right)=
\left( 
\begin{array}{c}
	e^{-\gamma t}u\\
	e^{-\alpha t} \int_{\mathbb R^2} K(t,x-z)  v(z,y)d z
\end{array}
\right)
\end{equation}
defines a strongly continuous semigroup of bounded linear operators on $(X, \Vert .\Vert_X)$. 
\end{lemma}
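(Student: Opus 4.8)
The plan is to exploit the block-diagonal structure of $T_A(t)$: on the first coordinate it is merely the scalar multiplication semigroup $u\mapsto e^{-\gamma t}u$ on $\BUC(\R^2)$, while on the second coordinate it is $S(t):=e^{-\alpha t}T(t)$, where $T(t)\varphi(x)=\int_{\R^2}K(t,x-z)\varphi(z)\,dz$ is the heat semigroup acting in the $x$-variable. Once this is observed, the algebraic semigroup identities are immediate: $T_A(0)=I$ is clear, and $T_A(t+s)=T_A(t)T_A(s)$ follows on the first coordinate from $e^{-\gamma(t+s)}=e^{-\gamma t}e^{-\gamma s}$ and on the second coordinate from $e^{-\alpha(t+s)}=e^{-\alpha t}e^{-\alpha s}$ together with the Chapman--Kolmogorov identity $\int_{\R^2}K(t,x-z)K(s,z-z')\,dz=K(t+s,x-z')$, i.e. the semigroup property of the diffusion semigroup already invoked in Lemma \ref{LE4.1}.

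Next I would check that $T_A(t)$ is a bounded operator mapping $X=\BUC(\R^2)\times(Y_1\cap Y_2)$ into itself. On the first coordinate $\|e^{-\gamma t}u\|_\infty\le\|u\|_\infty$. On the second coordinate, the contraction estimate $\|T(t)\varphi\|_Y\le\|\varphi\|_Y$ established in the proof of Lemma \ref{LE6.2}, applied with $Z=L^1(\R^2)$ so that $Y=Y_1$, shows both $T(t)v\in Y_1$ and $\|T(t)v\|_{Y_1}\le\|v\|_{Y_1}$; similarly the change of variables recorded just before Lemma \ref{LE6.4}, namely $T(t)v(x+y,y)=\int_{\R^2}K(t,x-z)w(z,y)\,dz$, shows that $T(t)v$ is again of the form required to lie in $Y_2$ and that $\|T(t)v\|_{Y_2}=\|T(t)w\|_{L^1(\R^2,\BUC(\R^2))}\le\|w\|_{L^1(\R^2,\BUC(\R^2))}=\|v\|_{Y_2}$. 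Adding the two estimates, $\|S(t)v\|_{Y_1}+\|S(t)v\|_{Y_2}\le e^{-\alpha t}\big(\|v\|_{Y_1}+\|v\|_{Y_2}\big)$, so $S(t)$ leaves $Y_1\cap Y_2$ invariant and $\|T_A(t)\|_{\mathcal L(X)}\le\max\{e^{-\gamma t},e^{-\alpha t}\}\le 1$.

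For strong continuity I would fix $(u,v)\in X$ and let $t\to 0^+$. On the first coordinate $\|e^{-\gamma t}u-u\|_\infty=|e^{-\gamma t}-1|\,\|u\|_\infty\to 0$. On the second coordinate I would split
$$S(t)v-v=e^{-\alpha t}\big(T(t)v-v\big)+(e^{-\alpha t}-1)v,$$
so that $\|S(t)v-v\|_{Y_1}\le\|T(t)v-v\|_{Y_1}+|e^{-\alpha t}-1|\,\|v\|_{Y_1}$, where the first term tends to $0$ by Lemma \ref{LE6.2} (with $Z=L^1(\R^2)$) and the second trivially; the identical argument in the $Y_2$-norm, using Lemma \ref{LE6.4}, gives $\|S(t)v-v\|_{Y_2}\to 0$. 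Summing over the two factor norms yields $\|T_A(t)(u,v)-(u,v)\|_X\to 0$, which is the claimed strong continuity.

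The genuinely delicate point — everything else being the routine "semigroup times scalar exponential" bookkeeping — is that $T(t)$ must preserve the structural constraint defining $Y_2$ (that $v(x,y)=w(x-y,y)$ with $x\mapsto w(x,\cdot)\in L^1(\R^2,\BUC(\R^2))$) and be a contraction for the $Y_2$-norm; this is exactly what the change-of-variables computation preceding Lemma \ref{LE6.4} supplies. The heavy analytic input — approximation by compactly supported functions and the decay of the heat-kernel tails — has already been absorbed into Lemmas \ref{LE6.2}--\ref{LE6.4}, so I do not expect any serious obstacle beyond correctly threading those lemmas (with the right choice of $Z$) through the two-component norm on $X$.
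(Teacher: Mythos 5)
Your proof is correct and follows exactly the route the paper intends: the paper states this lemma without a written proof, treating it as the direct assembly of Lemma \ref{LE6.2} (with $Z=L^1(\R^2)$, giving the $Y_1$ part), Lemma \ref{LE6.4} (the $Y_2$ part via the change of variables $w(x,y)=v(x+y,y)$), and the trivial scalar semigroup $e^{-\gamma t}$ on $\BUC(\R^2)$, which is precisely your block-diagonal argument. The only content you add beyond the paper's implicit reasoning is the explicit verification of the semigroup law via Chapman--Kolmogorov and the contraction bound on $X$, both of which are routine and correct.
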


\subsection{Bounded linear perturbation}
We define the linear operator $B:\BUC\left(\R^2\right) \times  Y_1 \cap Y_2 \to \BUC\left(\R^2\right) \times  Y_1 \cap Y_2$ by 
	$$
	B \left( 
	\begin{array}{c}
		u\\
		v
	\end{array}
	\right)=\left( 
	\begin{array}{c}
		\phi\\
		\psi
	\end{array}
	\right)\text{ with }
	\left\{
	\begin{array}{c}
	 \phi(y)=\alpha \int_{\R^2}v(x,y)dx \\
\psi(x,y)=\gamma \, \rho(x-y) \, u(y)
	\end{array}
	\right..
	$$
Note that $\phi\in 	\BUC\left(\R^2\right)$ due to Lemma \ref{LE6.1}.
	Next the following lemma holds true.
	\begin{lemma} The linear operator  $B$  is bounded on $(X,\|\cdot\|_X)$. 
		\end{lemma}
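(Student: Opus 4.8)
The plan is to bound each of the three norms appearing in $\|\cdot\|_X$ separately, using the explicit componentwise definition of $B$. Writing $B(u,v)=(\phi,\psi)$ with $\phi(y)=\alpha\int_{\R^2}v(x,y)dx$ and $\psi(x,y)=\gamma\rho(x-y)u(y)$, I would first estimate $\|\phi\|_\infty$, then $\|\psi\|_{Y_1}$, and finally $\|\psi\|_{Y_2}$, and collect the constants.

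For the first component, $|\phi(y)|\leq\alpha\int_{\R^2}|v(x,y)|dx$. The issue here is that the natural bound on $\int_{\R^2}|v(x,y)|dx$ uniform in $y$ is exactly what $\|v\|_{Y_2}$ controls: indeed, by the change of variables used in the proof of Lemma \ref{LE6.1}, $\int_{\R^2}|v(x,y)|dx=\int_{\R^2}|v(x+y,y)|dx\leq\sup_{y'\in\R^2}\int_{\R^2}|v(x+y',y')|dx'$... actually more directly $\int_{\R^2}|v(x,y)|dx=\int_{\R^2}|v(x+y,y)|dx\leq\int_{\R^2}\sup_{y'}|v(x+y',y')|dx=\|v\|_{Y_2}$. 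Hence $\|\phi\|_\infty\leq\alpha\|v\|_{Y_2}$.

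For $\psi$ in the $Y_1$ norm: $\|\psi\|_{Y_1}=\sup_{x\in\R^2}\int_{\R^2}\gamma\rho(x-y)|u(y)|dy\leq\gamma\|u\|_\infty\sup_{x}\int_{\R^2}\rho(x-y)dy=\gamma\|u\|_\infty$, using $\int\rho=1$. For $\psi$ in the $Y_2$ norm: the associated $w$-function is $\psi(x+y,y)=\gamma\rho(x)u(y)$, so $\|\psi\|_{Y_2}=\int_{\R^2}\sup_{y}\gamma\rho(x)|u(y)|dx=\gamma\|u\|_\infty\int_{\R^2}\rho(x)dx=\gamma\|u\|_\infty$ — this is precisely the motivation, explained in the paragraph preceding the definition of $Y_2$, for choosing that space. (One should also record that $\phi\in\BUC(\R^2)$, which is Lemma \ref{LE6.1}, and that $\psi$ genuinely lies in $Y_1\cap Y_2$, which is immediate from the two finite bounds just obtained together with continuity of $\rho$ and $u$.) Combining, $\|B(u,v)\|_X=\|\phi\|_\infty+\|\psi\|_{Y_1}+\|\psi\|_{Y_2}\leq\alpha\|v\|_{Y_2}+2\gamma\|u\|_\infty\leq\max\{2\gamma,\alpha\}\|(u,v)\|_X$, proving boundedness.

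There is no real obstacle here; the only point that requires a moment's care is the first estimate, where one must resist bounding $\int|v(x,y)|dx$ by $\|v\|_{Y_1}$ (which controls $\int|v(x,y')|dy'$, the wrong integration variable) and instead use the change of variables tying it to $\|v\|_{Y_2}$ — exactly as in the proof of Lemma \ref{LE6.1}. Everything else is a direct application of Fubini/Tonelli and the normalization $\int_{\R^2}\rho=1$.
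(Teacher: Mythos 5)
Your proof is correct and follows essentially the same route as the paper's: the key estimate $\sup_y\int_{\R^2}|v(x,y)|\,dx\le \Vert v\Vert_{Y_2}$ via the change of variables $x\mapsto x+y$, the bound $\Vert\psi\Vert_{Y_1}\le\gamma\Vert u\Vert_\infty$ from $\int_{\R^2}\rho=1$, the identity $\Vert\psi\Vert_{Y_2}=\gamma\Vert u\Vert_\infty$, and Lemma \ref{LE6.1} for $\phi\in\BUC(\R^2)$ all appear in the paper's argument. Your explicit remark about resisting the temptation to use $\Vert v\Vert_{Y_1}$ (wrong integration variable) and your final collection of constants are fine refinements, but the substance is identical.
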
 
	\begin{proof} Let $u \in \BUC(\R^2)$ and $v \in  Y_1 \cap Y_2 $.  As noticed above, from Lemma \ref{LE6.1} and since $v \in Y_2$, we know that the map $y \to	 \alpha \int_{\R^2}v(x,y)dx$ is bounded and uniformly continuous. Moreover, we have 
		$$
	\sup_{y \in \R^2} \vert \int_{\R^2}v(x,y)dx \vert=\sup_{y \in \R^2} \vert \int_{\R^2}v(x'+y,y)dx'  \vert  \leq   \int_{\R^2}\sup_{y \in \R^2} \vert v(x'+y,y) \vert dx'=\Vert v \Vert_{2},
		$$
		while
		$$
		\begin{array}{ll}
			\Vert  \psi \Vert_{Y_1}&=	\sup_{x \in \R^2} \vert \int_{\R^2} \rho(x-y) \, u(y)dy \vert \\
			&=\sup_{x \in \R^2} \vert \int_{\R^2} \rho(z) \, u(x-z)dz  \vert \\
			& \leq  \int_{\R^2} \rho(z) dz\,	\sup_{x \in \R^2} \vert u(x) \vert ,
		\end{array}
		$$
		and 
		$$
		\Vert \rho(x-y) \, u(y) \Vert_{Y_2} = \int_{\R^2} \sup_{y \in \R}  \vert \rho(x) \, u(y)  \vert dx=  \sup_{y \in \R}  \vert \, u(y)  \vert.
		$$
		The result follows. 
	\end{proof}

Let us consider the return to home model 
\begin{equation} \label{6.6}
	\left\{
	\begin{array}{l}
		\partial_t u(t,y)=-\gamma u(t,y)+ \alpha \int_{\R^2}v(t,x,y)dx, \vspace{0.2cm}\\
		\partial_t v(t,x,y)=  \varepsilon^2 \bigtriangleup_x v(t,x,y)-\alpha \,  v(t,x,y)+\gamma \, \rho(x-y) \, u(t,y),
	\end{array}
	\right. 
\end{equation}
with the initial distribution 
\begin{equation} \label{6.7}
	\left\{ 
	\begin{array}{ll}
		u(0,y)=u_0(y) \in \BUC \left( \R^2\right),\vspace{0.1cm} \\
		\text{and} \vspace{0.1cm}\\
		v(0,x,y)=v_0(x,y) \in Y_1 \cap Y_2.
	\end{array}
	\right.
\end{equation}
Let $A:D(A) \subset X \to X $ be the infinitesimal generator of the strongly continuous semigroup $\left\{T_A(t)\right\}_{t \geq 0}$.  By considering  $w=\left( 
\begin{array}{c}
	u\\
	v
\end{array}
\right)$  the problem \eqref{6.6} can be rewritten as the following abstract Cauchy problem  
\begin{equation} \label{6.8}
	w'(t)=\left(A+B\right)w(t), \text{ for } t \geqq 0, \text{ and } w(0)=\left( 
\begin{array}{c}
	u_0\\
	v_0
\end{array}
\right) \in X. 	
\end{equation}

	\begin{theorem} \label{TH6.7}The linear operator $A+B:D(A) \subset X \to X $ is the infinitesimal generator of a strongly continuous semigroup $\left\{T_{A+B}(t)\right\}_{t \geq 0}$ of bounded linear operator on $X$. Moreover $t \to u(t)=	T_{A+B}(t)w_0$ is the unique continuous map satisfying  
		$$
	u(t) =	T_{A}(t)w_0 + \int_{0}^{t} T_{A}\left(t-\sigma \right) Bu(\sigma )d \sigma, \forall t \geq 0. 
		$$
		Furthermore, for each $t \geq 0$, the positive cone $X_+=\BUC_+\left(\R^2\right) \times  Y_{1+} \cap Y_{2+}$ (where the plus sign stands for the space of non-negative functions) is positive invariant by $T_{A+B}(t)$. 
	\end{theorem}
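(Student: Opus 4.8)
The plan is to invoke the standard bounded-perturbation theorem for strongly continuous semigroups (see e.g. Pazy): since $A$ generates the $C_0$-semigroup $\{T_A(t)\}_{t\ge 0}$ on $X$ (established in the previous lemma) and $B$ is a bounded linear operator on $X$ (established in the previous lemma), the sum $A+B$ with domain $D(A)$ generates a $C_0$-semigroup $\{T_{A+B}(t)\}_{t\ge 0}$, given by the Dyson--Phillips series, and this semigroup is the unique solution of the variation-of-constants identity $u(t)=T_A(t)w_0+\int_0^t T_A(t-\sigma)Bu(\sigma)\,d\sigma$. So the first two sentences of the theorem are essentially free once one checks the hypotheses are met, which they are by the cited lemmas. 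One small verification: one should note that $B$ indeed maps $X$ into $X$, in particular that the first component $\phi(y)=\alpha\int_{\R^2}v(x,y)\,dx$ lands in $\BUC(\R^2)$, which is exactly the content of Lemma \ref{LE6.1}, and that $\psi=\gamma\rho(\cdot-y)u(y)$ lies in $Y_1\cap Y_2$, which is the computation in the proof of the boundedness lemma.

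The substantive part is the positivity of the cone $X_+$. I would prove this by exhibiting $T_{A+B}(t)$ as a limit of positive operators. First observe that $T_A(t)$ itself is positive: its first component $e^{-\gamma t}u$ preserves nonnegativity trivially, and its second component is convolution against the strictly positive heat kernel $K(t,\cdot)$ times $e^{-\alpha t}>0$, hence also preserves nonnegativity; so $T_A(t)X_+\subset X_+$ for all $t\ge 0$. Next observe that $B$ is a positive operator: if $u\ge 0$ and $v\ge 0$ then $\phi(y)=\alpha\int_{\R^2}v(x,y)\,dx\ge 0$ and $\psi(x,y)=\gamma\rho(x-y)u(y)\ge 0$ since $\rho>0$, $\alpha,\gamma>0$. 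Therefore each term in the Dyson--Phillips (Picard) iteration
\begin{equation*}
T_{A+B}^{(0)}(t)=T_A(t),\qquad T_{A+B}^{(n+1)}(t)w_0=T_A(t)w_0+\int_0^t T_A(t-\sigma)\,B\,T_{A+B}^{(n)}(\sigma)w_0\,d\sigma
\end{equation*}
maps $X_+$ into $X_+$ (composition and positive-weight integration of positive operators stay positive, and $X_+$ is closed), and since $T_{A+B}^{(n)}(t)\to T_{A+B}(t)$ in operator norm uniformly on compact time intervals, the limit $T_{A+B}(t)$ also maps the closed cone $X_+$ into itself.

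The step I expect to require the most care is not conceptual but structural: making sure the Banach space $X=\BUC(\R^2)\times(Y_1\cap Y_2)$ is set up so that both the semigroup estimate and the positivity argument go through simultaneously on the intersection $Y_1\cap Y_2$. Concretely, one must check that $T_A(t)$ and $B$ are bounded and positive with respect to each of the three seminorms $\|\cdot\|_\infty$, $\|\cdot\|_{Y_1}$, $\|\cdot\|_{Y_2}$ that build $\|\cdot\|_X$, and that the closedness of $X_+$ is genuinely the intersection of the closed cones — this is where Lemmas \ref{LE6.2}, \ref{LE6.3}, \ref{LE6.4} on strong continuity of the heat semigroup in the various vector-valued settings are invoked. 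Once those are in hand, the positivity passes to the limit as described, and the uniqueness of the mild solution is the standard Gronwall/contraction argument on $C([0,T];X)$, which I would only sketch.
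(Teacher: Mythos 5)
Your proposal is correct and follows essentially the route the paper intends: the paper states Theorem \ref{TH6.7} without a written proof, treating it as a direct consequence of the standard bounded-perturbation theorem (using the preceding lemmas that $\{T_A(t)\}_{t\ge 0}$ is a strongly continuous semigroup on $X$ and that $B$ is bounded), with positivity obtained exactly as you argue, from the positivity of $T_A(t)$ and $B$ and the convergence of the Dyson--Phillips/Picard iterates on the closed cone $X_+$ (the machinery referred to in Magal and Ruan). Your additional care about the three seminorms building $\Vert\cdot\Vert_X$ and the closedness of $Y_{1+}\cap Y_{2+}$ is sound and consistent with the paper's framework.
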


\section{Model with colonization and return  to home}

In this section we incorporate a colonization phenomenon in the return to home model. This new feature turns out to be important especially when dealing with animal dispersal behavior. The following model takes into account a colonization effect that corresponds to the fact that when it is time to return home for the travelers, a fraction $p$ (with $0 \leq p \leq 1$) returns back home like before, and a fraction $1-p$ colonizes the location they have reached at the end of their traveling period. This return to home model with colonization reads as the following system of equations
\begin{equation} \label{7.1}
	\left\{
	\begin{array}{ll}
		\partial_t u(t,y)&=-\gamma u(t,y)+\, \alpha \, p \int_{\R^2}v(t,x,y)dx\vspace{0.2cm}\\
		&+ \underset{\text{Flux of individuals colonizing the location y}}{\underbrace{ \alpha \, (1-p) \int_{\R^2}v(t,y,y')dy'}} \vspace{0.4cm}\\
		\partial_t v(t,x,y)&=  \varepsilon^2 \bigtriangleup_x v(t,x,y) -\alpha \,  v(t,x,y)+\gamma \, \rho(x-y) \, u(t,y),
	\end{array}
	\right. 
\end{equation}
where $p \in [0,1]$. 

\begin{remark} Recall that $1/\gamma$ is the average time spent at home while $1/\alpha$ stands for the average time spent to travel. If we assume that $p=0$ (that is no individual return to home after traveling), the process described above corresponds to rest during a period $1/\gamma$ after a period of travel $1/\alpha$. So the case where $p=0$ is also realistic and this extends the standard diffusion process. The corresponding system could called for short the "diffuse and rest model". 
	\end{remark} 

In the model the term $ \alpha \, (1-p) \int_{\R^2}v(t,y,y')dy'$ counts the fraction of individuals who colonize the location $y$ at the end of their travel. It is summed over all their previous home locations $y'\in\R^2$.  Hence $1-p$ is the fraction of the travelers that will change their home location by the end of their travel period. 

\begin{lemma} \label{LE7.1} Let $v \in Y_1$ be given. Then the map  
	$$
	x \to \int_{\R^2}    v(x,y) dy
	$$
	is bounded and uniformly continuous on $\R^2$, and one has
	$$
	\sup_{x \in  \R^2}  \left\vert \int_{\R^2}    v(x,y) dy \right\vert \leq \Vert v \Vert_{Y_1}. 
	$$
	In other words the linear map $v\to \displaystyle \int_{\R^2}v(\cdot,y)dy$ is bounded from $Y_1$ into $\BUC  \left(\R^2\right)$.
\end{lemma}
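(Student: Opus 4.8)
The plan is to recognize this as an immediate consequence of the definition of $Y_1=\BUC(\R^2,L^1(\R^2))$ together with the elementary fact that integration is a bounded linear functional on $L^1(\R^2)$. Concretely, I would introduce the evaluation/integration functional $I\colon L^1(\R^2)\to\R$, $I(\phi)=\int_{\R^2}\phi(y)\,dy$, and note that it is well defined and satisfies $|I(\phi)|\le\int_{\R^2}|\phi(y)|\,dy=\Vert\phi\Vert_{L^1(\R^2)}$, so $I$ is linear and bounded (indeed $1$-Lipschitz). Since $v\in Y_1$ means precisely that $x\mapsto v(x,\cdot)$ is a bounded uniformly continuous map from $\R^2$ into $L^1(\R^2)$, the map $x\mapsto \int_{\R^2}v(x,y)\,dy = I\bigl(v(x,\cdot)\bigr)$ is the composition of a bounded uniformly continuous map with a Lipschitz linear functional, hence itself bounded and uniformly continuous on $\R^2$.

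For the quantitative estimate I would simply write, for each fixed $x\in\R^2$,
$$
\left\vert\int_{\R^2}v(x,y)\,dy\right\vert\le\int_{\R^2}|v(x,y)|\,dy=\Vert v(x,\cdot)\Vert_{L^1(\R^2)}\le\sup_{x'\in\R^2}\Vert v(x',\cdot)\Vert_{L^1(\R^2)}=\Vert v\Vert_{Y_1},
$$
and then take the supremum over $x$. For the uniform continuity made explicit: given $\varepsilon>0$, uniform continuity of $x\mapsto v(x,\cdot)$ in $L^1$ yields $\eta>0$ with $\Vert v(x,\cdot)-v(x',\cdot)\Vert_{L^1(\R^2)}<\varepsilon$ whenever $|x-x'|<\eta$, and then $\bigl|\int_{\R^2}v(x,y)\,dy-\int_{\R^2}v(x',y)\,dy\bigr|\le\Vert v(x,\cdot)-v(x',\cdot)\Vert_{L^1(\R^2)}<\varepsilon$. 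Linearity of $v\mapsto\int_{\R^2}v(\cdot,y)\,dy$ is clear, so the last assertion that this map is bounded from $Y_1$ into $\BUC(\R^2)$ follows from the displayed inequality with operator norm at most $1$.

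I do not anticipate a genuine obstacle here: the only point requiring a word of care is that $y\mapsto v(x,y)$ is integrable for each fixed $x$, which is exactly what membership of $v(x,\cdot)$ in $L^1(\R^2)$ guarantees, so the integral defining the map is meaningful pointwise in $x$. Everything else is a routine composition-of-continuous-maps argument analogous to, but simpler than, Lemma \ref{LE6.1}.
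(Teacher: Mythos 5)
Your proof is correct and follows essentially the same route as the paper: you compose the bounded uniformly continuous map $x\mapsto v(x,\cdot)\in L^1(\R^2)$ with the bounded linear functional $\phi\mapsto\int_{\R^2}\phi(y)\,dy$, exactly as the paper does, only spelling out the norm estimate and the uniform continuity step in more detail.
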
 
\begin{proof} Recall that the space $Y_1$ is the set of maps $x \to v(x,.)$ which belongs to $\BUC  \left(\R^2, L^1 \left(\R^2\right) \right)$. The linear form  
	$$
	G:	u \to G(u)=\int_{\R^2}    u(y) dy .
	$$
	is bounded linear on $L^1(\R^2)$. Therefore the map $x \to G(u(x,.))$ must be bounded and uniformly continuous on $\R^2$. 
\end{proof}

\bigskip 
\noindent We define the linear bounded operator $C:X \to X$ by 
$$
C \left( 
\begin{array}{c}
	u\\
	v
\end{array}
\right)=\left( 
\begin{array}{c}
	\phi\\
	0
\end{array}
\right)
\text{ with }\phi(y)=\alpha \, \int_{\R^2}v(y,y')dy'.
$$
By considering  $w=\left( 
\begin{array}{c}
	u\\
	v
\end{array}
\right)$ the problem \eqref{7.1} can be rewritten as the following abstract Cauchy problem in $X$
\begin{equation*}
	w'(t)=\left(A+pB+(1-p)C\right)w(t), \text{ for } t \geqq 0, \text{ and } w(0)=w_0=\left( 
	\begin{array}{c}
		u_0\\
		v_0
	\end{array}
	\right) \in  X. 
\end{equation*}

\begin{theorem} \label{TH7.2}Let $p\in [0,1]$ be given. Then the linear operator $A+pB+(1-p)C:D(A) \subset X \to X $ is the infinitesimal generator of a strongly continuous semigroup $\left\{T_{A+pB+(1-p)C}(t)\right\}_{t \geq 0}$ of bounded linear operator on $X$. Moreover $t \to w(t)=	T_{A+pB+(1-p)C}(t)w_0$ is the unique continuous map satisfying  
	$$
	w(t) =	T_{A}(t)w_0 + \int_{0}^{t} T_{A}\left(t-\sigma \right) \left( pB+(1-p)C \right) w(\sigma )d \sigma,\; \forall t \geq 0. 
	$$
	Furthermore, for each $t \geq 0$, the positive cone $X_+=\BUC_+\left(\R^2\right) \times  \left(Y_{1+} \cap Y_{2+}\right)$ (where the plus sign stands for the space of non-negative functions) is positively invariant by $T_{A+pB+(1-p)C}(t)$. 
\end{theorem}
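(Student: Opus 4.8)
The plan is to mimic the two-step strategy behind Theorem~\ref{TH6.7}: first realize $A+pB+(1-p)C$ as a bounded perturbation of the generator $A$, and then deduce positivity of the resulting semigroup from a Dyson--Phillips type expansion.

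\textit{Step 1: generation and the mild formulation.} We already know that $B$ is a bounded linear operator on $X$. For $C$, let $(u,v)\in X$, so that $v\in Y_1\cap Y_2$; Lemma~\ref{LE7.1} guarantees that $y\mapsto\int_{\R^2}v(y,y')\,dy'$ is bounded and uniformly continuous with $\sup_{y\in\R^2}\left|\int_{\R^2}v(y,y')\,dy'\right|\le\|v\|_{Y_1}\le\|(u,v)\|_X$. Hence $C(u,v)=(\phi,0)$ with $\phi=\alpha\int_{\R^2}v(\cdot,y')\,dy'\in\BUC(\R^2)$, $0\in Y_1\cap Y_2$, and $\|C(u,v)\|_X\le\alpha\|(u,v)\|_X$; thus $C$ is bounded on $X$. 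Consequently, for every $p\in[0,1]$ the operator $pB+(1-p)C$ is a bounded linear operator on $X$, and the classical bounded perturbation theorem for strongly continuous semigroups applies to the generator $A$ of $\{T_A(t)\}_{t\ge0}$: the operator $A+pB+(1-p)C$ with domain $D(A)$ generates a strongly continuous semigroup $\{T_{A+pB+(1-p)C}(t)\}_{t\ge0}$ on $X$. The variation-of-constants identity in the statement and the uniqueness of the continuous solution of the associated fixed-point equation then follow exactly as for Theorem~\ref{TH6.7}, uniqueness being a consequence of Gronwall's inequality applied to $t\mapsto\|w(t)\|_X$.

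\textit{Step 2: positivity of the cone.} Put $D:=pB+(1-p)C$ and recall that $\{T_{A+D}(t)\}_{t\ge0}$ is obtained, uniformly on bounded time intervals, as the limit of the iterates $T_0(t):=T_A(t)$ and $T_{n+1}(t)w_0:=T_A(t)w_0+\int_0^tT_A(t-\sigma)\,D\,T_n(\sigma)w_0\,d\sigma$. It therefore suffices to check that $T_A(t)$ and $D$ are positive operators. Positivity of $T_A(t)$ is read off directly from \eqref{6.5}: if $u\ge0$ then $e^{-\gamma t}u\ge0$, and if $v\ge0$ then $e^{-\alpha t}\int_{\R^2}K(t,x-z)v(z,y)\,dz\ge0$ since $K>0$; moreover $T_A(t)$ maps $Y_1\cap Y_2$ into itself by Lemmas~\ref{LE6.3}--\ref{LE6.4}. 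Positivity of $D$ follows because $p\ge0$, $1-p\ge0$, and both $B$ and $C$ preserve $X_+$: for $(u,v)\in X_+$ one has $B(u,v)=(\phi,\psi)$ with $\phi(y)=\alpha\int_{\R^2}v(x,y)\,dx\ge0$ and $\psi(x,y)=\gamma\rho(x-y)u(y)\ge0$ (and $\psi\in Y_1\cap Y_2$ by the estimates used in Section~6), while $C(u,v)=(\phi,0)$ with $\phi(y)=\alpha\int_{\R^2}v(y,y')\,dy'\ge0$. By induction each $T_n(t)$ leaves the closed cone $X_+=\BUC_+(\R^2)\times(Y_{1+}\cap Y_{2+})$ invariant, and letting $n\to\infty$ preserves this invariance because $X_+$ is closed.

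\textit{Expected main obstacle.} All the real work sits in the lemmas already proved, so the only delicate point is the meaningfulness and boundedness of the diagonal term defining $C$, namely that $\phi(y)=\alpha\int_{\R^2}v(y,y')\,dy'$ genuinely defines an element of $\BUC(\R^2)$ for $v\in Y_1$; this is exactly Lemma~\ref{LE7.1}, which says that $x\mapsto\int_{\R^2}v(x,y')\,dy'$ belongs to $\BUC(\R^2)$, so that its restriction to the diagonal $x=y$ is legitimate. Granting this, boundedness of $C$ and hence the whole argument is routine, and no estimate beyond those already employed for Theorem~\ref{TH6.7} is required.
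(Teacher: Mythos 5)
Your proposal is correct and follows essentially the same route the paper intends: $pB+(1-p)C$ is a bounded positive perturbation of the generator $A$ (boundedness of $C$ resting on Lemma \ref{LE7.1}, of $B$ on the lemma of Section 6), so the bounded perturbation theorem gives generation and the variation-of-constants characterization, while positivity of $X_+$ follows from the positivity of $T_A(t)$, $B$ and $C$ together with the iteration/Dyson--Phillips expansion and the closedness of the cone. No gap to report.
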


\medskip 
\noindent \textbf{Colonization and return to home model:} If add a vital dynamic on the individual staying at home in the model with colonization and retrun home 
\begin{equation} \label{7.2}
	\left\{
	\begin{array}{lll}
		\partial_t u(t,y)=&-\gamma u(t,y)+\, \alpha  \left[p \int_{\R^2}v(t,x,y)dx +  (1-p) \int_{\R^2}v(t,y,y')dy'\right]   \vspace{0.4cm}\\
		&+ \beta u(t,y)- \mu u(t,y)- \kappa u(t,y)^2 \vspace{0.4cm}\\
		\partial_t v(t,x,y)=&  \varepsilon^2 \bigtriangleup_x v(t,x,y) -\alpha \,  v(t,x,y)+\gamma \, \rho(x-y) \, u(t,y)-\nu v(t,x,y),
	\end{array}
	\right. 
\end{equation}
where $p \in [0,1]$. 

To deal with the above system we consider the nonlinear map $F:X\to X$ given by
$$
F\begin{pmatrix} u\\ v\end{pmatrix} =\begin{pmatrix}(\beta-\mu)u-\kappa u^2\\ 0\end{pmatrix}.
$$
Together with the notation of the previous section and replacing $\alpha$ by $\alpha+\nu$ into \eqref{6.5}, problem \eqref{7.2} rewrites as the following abstract Cauchy problem for $w=\left( 
\begin{array}{c}
	u\\
	v
\end{array}
\right)\in X$:
\begin{equation}\label{7.3}
w'(t)=\left(A+pB+(1-p)C\right)w(t)+F(w(t)), \text{ for } t \geqq 0,
\end{equation}
and
\begin{equation}\label{7.4}
 w(0)=w_0=\left( 
		\begin{array}{c}
			u_0\\
			v_0
		\end{array}
		\right) \in  X. 
\end{equation}

Note that the function $f:\R\to\R$ given by $f(u)=(\beta-\mu)u-\kappa u^2$ satisfies: For each $M>0$ there exists $\lambda(M)\in \R$ such that the function $u\to f(u)+\lambda(M)u$ is non-negative and increasing on $[0,M]$. 
Hence since $F$ is locally Lipschitz continuous on $X$ and the semigroup $\{T_{A+pB+(1-p)C}(t)\}$ is positive (with respect to the cone $X_+$) we obtain the following lemma.

The following theorem is a consequence of the results proved in Magal and Ruan \cite{MR09, MR18}. The result on monotone semiflows are consequences of Hal Smith \cite{S1, S2}. We also refer to Magal, Seydi, and Wang \cite{MSW} for recent extensions. 
\begin{theorem}
The Cauchy problem \eqref{7.3}-\eqref{7.4} generates a maximal semiflow $U: \R_+\times X_+ \to X_+$ on $X_+$. More precisely 
for each $w_0 \in \X_+$ 
Moreover $U$ is monotone on $X_+$, that is  
$$
w_1 \geq w_0 \geq 0 \Rightarrow  U(t)w_1 \geq U(t)w_0 \geq 0, \forall t \geq 0.  
$$
\end{theorem}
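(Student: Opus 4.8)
The plan is to verify the hypotheses of the abstract theory in Magal--Ruan \cite{MR09, MR18} for the Cauchy problem \eqref{7.3}-\eqref{7.4}, treating the linear part $A+pB+(1-p)C$ and the nonlinear perturbation $F$ separately. The linear part is already under control: by Theorem~\ref{TH7.2} it generates a strongly continuous semigroup $\{T_{A+pB+(1-p)C}(t)\}_{t\ge 0}$ on $X$ which leaves the cone $X_+=\BUC_+(\R^2)\times(Y_{1+}\cap Y_{2+})$ positively invariant (note the change $\alpha\mapsto\alpha+\nu$ is harmless, since it only shifts the generator by a bounded multiple of the identity). So the only new object is $F$, and the first step is to record that $F$ is locally Lipschitz continuous on $X$: since $F\begin{pmatrix}u\\ v\end{pmatrix}=\begin{pmatrix}(\beta-\mu)u-\kappa u^2\\ 0\end{pmatrix}$ depends only on $u\in\BUC(\R^2)$ through the pointwise polynomial $f(u)=(\beta-\mu)u-\kappa u^2$, and since $\BUC(\R^2)$ is a Banach algebra under pointwise multiplication, $u\mapsto u^2$ is locally Lipschitz on $\BUC(\R^2)$; hence $F$ is locally Lipschitz on $X$, bounded on bounded sets.

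Local Lipschitz continuity together with the fact that $\{T_{A+pB+(1-p)C}(t)\}$ is a $C_0$-semigroup gives, by the standard variation-of-constants fixed-point argument (Magal--Ruan \cite{MR09}), a unique maximal mild solution $t\mapsto U(t)w_0$ on a maximal interval $[0,\tau(w_0))$ for each $w_0\in X$, satisfying $U(t)w_0=T_{A+pB+(1-p)C}(t)w_0+\int_0^t T_{A+pB+(1-p)C}(t-\sigma)F(U(\sigma)w_0)\,d\sigma$; this yields the maximal semiflow $U$. The second step is positivity: for $w_0\in X_+$ one uses the observation recorded just before the theorem, namely that for each $M>0$ there is $\lambda(M)\in\R$ with $u\mapsto f(u)+\lambda(M)u$ non-negative and increasing on $[0,M]$. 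Rewriting \eqref{7.3} as $w'(t)=\bigl(A+pB+(1-p)C-\lambda(M)I\bigr)w(t)+\bigl(F(w(t))+\lambda(M)w(t)\bigr)$, the shifted linear semigroup is still positive and the shifted nonlinearity maps the order interval $[0,M]$ (in the first coordinate) into $X_+$; a subtangential/invariance argument then keeps the solution in $X_+$ for as long as it exists, so $U(t):X_+\to X_+$.

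The third step is monotonicity. Here the key structural fact is that the whole right-hand side of \eqref{7.3}, after the shift by $\lambda(M)I$, is quasimonotone on $X_+$: the linear operators $A$, $B$, $C$ all have positive off-diagonal structure with respect to the cone (this is exactly what the positive invariance in Theorems~\ref{TH6.7} and \ref{TH7.2} encodes), and $u\mapsto f(u)+\lambda(M)u$ is increasing on the relevant order interval. Invoking the comparison principle for quasimonotone semilinear Cauchy problems (Hal Smith \cite{S1, S2}; see also \cite{MSW}), one concludes $w_1\ge w_0\ge 0\Rightarrow U(t)w_1\ge U(t)w_0\ge 0$ on the common interval of existence. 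Finally, since for comparison one may choose $M$ larger than the sup-norms of both solutions on any compact time interval, the localization in $\lambda(M)$ causes no difficulty.

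The main obstacle is not any single estimate but the bookkeeping needed to check the order-theoretic hypotheses of \cite{MR09, MR18, S1, S2} in the somewhat unusual state space $X=\BUC(\R^2)\times(Y_1\cap Y_2)$: one must confirm that this cone is normal and has the properties required for the comparison theorems, and that the nonlinearity $F$ — which acts only on the $\BUC$-component and is genuinely only quasimonotone (not monotone) because of the $-\kappa u^2$ term — fits the framework after the standard shift. Once the cone $X_+$ is seen to behave like the usual positive cones in these references and the quasimonotonicity of $A+pB+(1-p)C+F$ is established on order intervals, the conclusion follows directly from the cited abstract results.
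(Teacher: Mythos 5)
Your proposal is correct and follows essentially the same route as the paper: the paper itself gives no detailed proof, but records exactly the facts you verify (local Lipschitz continuity of $F$, positivity of the semigroup $\{T_{A+pB+(1-p)C}(t)\}$ from Theorem~\ref{TH7.2} with $\alpha$ replaced by $\alpha+\nu$, and the existence of $\lambda(M)$ making $f(u)+\lambda(M)u$ non-negative and increasing on $[0,M]$) and then invokes the abstract results of Magal--Ruan and the monotone-systems theory of Smith, just as you do.
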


\begin{remark} In the Lagrangian system of coordinate the above system becomes  
\begin{equation} \label{7.4}
	\left\{
	\begin{array}{lll}
		\partial_t u(t,y)=&-\gamma u(t,y)+\, \alpha  \left[p \int_{\R^2}w(t,z,y)dx +  (1-p) \int_{\R^2}w(t,y+y',y')dy'\right]   \vspace{0.4cm}\\
		&+ \beta u(t,y)- \mu u(t,y)- \kappa u(t,y)^2 \vspace{0.4cm}\\
		\partial_t w(t,z,y)=&  \varepsilon^2 \bigtriangleup_z w(t,z,y) -\alpha \,  w(t,z,y)+\gamma \, \rho(z) \, u(t,y)-\nu w(t,z,y). 
	\end{array}
	\right. 
\end{equation}
\end{remark}

\section{Application the an epidemic model with return to home}
\noindent \textbf{Epidemic model for people staying at home:} The following equation describes the flux between individuals staying at home at the location $y \in \R^2$ and individuals out of the house 
\begin{equation} \label{8.1}
	\left\{ 
	\begin{array}{ll}
		\partial_t  s_1(t,y)=& \alpha \int_{\R^2} s_2(t,x,y)dx- \gamma s_1(t,y)-\kappa_1 s_1(t,y) i_1(t,y), \\
		\\
		\partial_t i_1(t,y)= &\alpha \int_{\R^2} i_2(t,x,y)dx- \gamma i_1(t,y)+\kappa_1 s_1(t,y) i_1(t,y) \vspace{0.2cm} \\
		&-\nu_1 i_1(t,y)
	\end{array}
\right.
\end{equation}
\medskip 
\noindent \textbf{Epidemic model for travelers:} The following equation describes the flux between individuals staying at home at the location $y$ and the travelers 
\begin{equation} \label{8.2}
\left\{ 
	\begin{array}{ll}
		\partial_t s_2(t,x,y)&=  \varepsilon^2 \bigtriangleup_x s_2(t,x,y) -  \alpha \, s_2(t,x,y)+ \gamma \, \rho(x-y) \, s_1(t,y) 
		\\	
		\\&-\kappa_2 s_2(t,x,y) \int_{ \R^2} i_2(t,x, \widehat{y}) d\widehat{y} \\
		\\
		\partial_t i_2(t,x,y)&=  \varepsilon^2 \bigtriangleup_x i_2(t,x,y) -  \alpha \, i_2(t,x,y)+ \gamma \, \rho(x-y) \, i_1(t,y) 
		\\
		\\&+ \kappa_2 s_2(t,x,y) \int_{ \R^2}  i_2(t,x, \widehat{y}) d\widehat{y}-\nu_2 i_2(t,x,y)
	\end{array}
\right.
\end{equation}
In the above epidemic model we assume that the transmission occurs locally in space. At home (see \eqref{8.1}) the term   $\kappa_1 s_1(t,y) i_1(t,y)$  means that susceptible individuals located at $y$ can only be infected by the infectious at home and located at the same position $y$. For travelers susceptible individuals originated from $y$ and located at $x$ at time $t$ can be infected by infected travelers located at $x$ at time $t$ whatever the location of their home position. In words, for the travelers the incidence rate at time $t$ and spatial location $x$ reads as $\kappa_2 s_2(t,x,y) \int_{\R^2} i_2(t,x, \widehat{y}) d\widehat{y}$.

\medskip 
\noindent \textbf{Initial condition:} The system  \eqref{8.1} and \eqref{8.2} is complemented with the initial distributions
\begin{equation} \label{8.3}
 s_1(0,y):=s_{10}(y) \text{ and } i_1(0,y):=i_{10}(y), 
\end{equation}
and
\begin{equation} \label{8.4}
	s_2(0,x,y):=s_{20}(x,y) \text{ and } i_2(0,x,y):=i_{20}(x,y). 
\end{equation}
As above to handle this problem we consider the Banach space $\mathbb X:=X\times X$ endowed with the product norm. We define the linear operator $\mathbb A: D(\mathbb A)\subset \mathbb X\to \mathbb X$ by
\begin{equation*}
D(\mathbb A)=D(A)\times D(A),
\end{equation*}
and
\begin{equation*}
\mathbb A\begin{pmatrix} \begin{pmatrix} s_1\\ s_2\end{pmatrix}\\\begin{pmatrix} i_1\\ i_2\end{pmatrix}\end{pmatrix}= \begin{pmatrix} 
\begin{pmatrix}
- \gamma s_1(y)\\
\varepsilon^2 \bigtriangleup_x s_2(x,y) -  \alpha \, s_2(x,y)+ \gamma \, \rho(x-y) \, s_1(y)
\end{pmatrix}
\\
\begin{pmatrix}
- (\gamma+\nu_1) i_1(y)\\
\varepsilon^2 \bigtriangleup_x i_2(x,y) -  (\alpha+\nu_2)  i_2(x,y)+ \gamma \, \rho(x-y) \, i_1(y)
\end{pmatrix}
\end{pmatrix}
\end{equation*}
We also define the bounded linear operator $\mathbb L:X\times X\to X\times X$ by
\begin{equation*}
\mathbb L\begin{pmatrix} \begin{pmatrix} s_1\\ s_2\end{pmatrix}\\\begin{pmatrix} i_1\\ i_2\end{pmatrix}\end{pmatrix}= \begin{pmatrix} 
\begin{pmatrix}
\alpha \int_{\R^2} s_2(x,\cdot)dx\\
0
\end{pmatrix}
\\
\begin{pmatrix}
\alpha \int_{\R^2} i_2(x,\cdot)dx
\\ 0
\end{pmatrix}
\end{pmatrix}
\end{equation*}
so that $\mathbb A+\mathbb L:D(\mathbb A):\mathbb X\to \mathbb X$ is the infinitesimal generator of a strongly continuous semigroup on $\mathbb X$, leaving the positive cone $\mathbb X_+=X_+\times X_+$ positively invariant. 

For the contamination terms, we define the nonlinear map $\mathbb F:X\times X\to X\times X$ by
\begin{equation*}
\mathbb F\begin{pmatrix} \begin{pmatrix} s_1\\ s_2\end{pmatrix}\\\begin{pmatrix} i_1\\ i_2\end{pmatrix}\end{pmatrix}= \begin{pmatrix} 
\begin{pmatrix}
-\kappa_1 s_1(y) i_1(y)\\
-\kappa_2 s_2(x,y) \int_{ \R^2} i_2(x, \widehat{y}) d\widehat{y}
\end{pmatrix}
\\
\begin{pmatrix}
\kappa_1 s_1(y) i_1(y)\\
\kappa_2 s_2(x,y) \int_{ \R^2} i_2(x, \widehat{y}) d\widehat{y}
\end{pmatrix}
\end{pmatrix}
\end{equation*}

Next setting
$$
W=\begin{pmatrix} \begin{pmatrix} s_1\\ s_2\end{pmatrix}\\\begin{pmatrix} i_1\\ i_2\end{pmatrix}\end{pmatrix}\in \mathbb X,
$$
system \eqref{8.1}-\eqref{8.4} becomes
\begin{equation}\label{8.5}
W'(t)=\left(\mathbb A+\mathbb L\right)W(t)+\mathbb F\left(W(t)\right),\;t>0,
\end{equation}
together with
\begin{equation} \label{8.6}
W(0)=\begin{pmatrix} \begin{pmatrix} s_{10}\\ s_{20}\end{pmatrix}\\\begin{pmatrix} i_{10}\\ i_{20}\end{pmatrix}\end{pmatrix}\in \mathbb X.
\end{equation}

Since $\mathbb F$ is locally Lipschitz continuous on $\mathbb X$, \eqref{8.5} generates a strongly continuous maximal semiflow on $\mathbb X$. Moreover for each $M>0$ there exists $\lambda(M)$ such that for all $W\in \mathbb X_+$ with $\|W\|_{\mathbb X}\leq M$ one has
$$
\mathbb F\left(W\right)+\lambda(M) W\in \mathbb X_+.
$$  
As a consequence, since $\mathbb A+\mathbb L$ generates a strongly continuous semigroup on $\mathbb X$ which is positive (with respect to the positive cone $\mathbb X_+$), the cone $\mathbb X_+$ is positively invariant with respect to this maximal semiflow. In other words, when the initial data are positive then the maximal solution is positive as well.

To see that the semiflow is globally defined, fix and initial data $
W(0)=( (s_{10}, s_{20}), (i_{10}, i_{20}))^T\in \mathbb X_+$ and denote by $W=( (s_{1}, s_{1}), (i_{1}, i_{2}))^T\in \mathbb X_+$ the maximal solution with initial data $w(0)$ defined on $[0,\tau)$ with $\tau\in (0,\infty]$. Let us that $\tau=\infty$
Adding the equations \eqref{8.1} and \eqref{8.2} integrated with respect to $x\in\R^2$ yields for any $t\geq 0$ and $y\in\R^2$:
\begin{equation*}
\begin{split}
s_1(t,y)+i_1(t,y)&\leq s_1(t,y)+i_1(t,y)+\int_{\R^2}\left(s_2+i_2\right)(t,x,y)dx\\
&\leq M:=\left\|(s_{10},s_{20})^T\right\|_X+\|(i_{10},i_{2,0})^T\|_X.
\end{split}
\end{equation*} 
Now from \eqref{8.2} and the positivity of the solution we obtain for $t\in [0,\tau)$, $(x,y)\in\R^2\times\R^2$ that
\begin{equation*}
\begin{split}
0\leq s_2(t,x,y)&\leq \int_{\R^2} e^{-\alpha t}K(t,x-x')s_{20}(x',y)dx'\\
&+\gamma M\int_0^t e^{-\alpha s}\int_{\R^2}K(s,x-y-x')\rho(x')dx,'
\end{split}
\end{equation*} 
This proves that the map $t\to \|s_2(t,\cdot)\|_{Y_1}+\|s_2(t,\cdot)\|_{Y_2}$ is uniformly bounded on $[0,\tau)$. 
From these bounds, the equation for $i_2$ becomes sub-linear so that $t\to \|i_2(t,\cdot)\|_{Y_1\cap Y_2}$ has at most an exponential growth. This prevents from finite time blow-up and ensures that $\tau=\infty$ and the semiflow is globally defined on $\mathbb X_+$. 

The following theorem is a consequence of the results proved in Magal and Ruan \cite{MR09, MR18}. 
\begin{theorem} The Cauchy problem \eqref{8.5}-\eqref{8.6} generates a continuous semiflow $U: \R_+ \times \mathbb X_+ \to \mathbb X_+$. More precisely, for each $x \in \X_+$, the map $t \to U(t)x$ is the unique continuous map from $[0, \infty)$ into $\X$ satisfying  
	$$
	u(t)=T_{{\A+\LL}}(t)x + \int_0^t T_{{\A+\LL}}(t-s)  \F(u(s)) ds, \forall t \geq 0. 
	$$
\end{theorem}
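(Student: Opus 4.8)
The plan is to derive this final theorem as a routine application of the abstract semilinear theory already invoked for Theorem~\ref{TH7.2} and for the colonization model, combined with the a priori bounds sketched in the text preceding the statement. First I would verify the three structural ingredients: (i) the linear part $\mathbb A+\mathbb L$ generates a strongly continuous semigroup on $\mathbb X$ which is positive with respect to $\mathbb X_+$ — this is asserted just before the theorem and follows from Theorem~\ref{TH6.7} and Lemma~\ref{LE7.1} applied componentwise, since $\mathbb A$ is a block-diagonal copy (with shifted decay rates $\alpha+\nu_2$, $\gamma+\nu_1$) of the operator $A+B$ and $\mathbb L$ is the bounded operator $B$ acting in each of the two components; (ii) the nonlinearity $\mathbb F$ is locally Lipschitz continuous on $\mathbb X$ — this reduces to checking that the bilinear forms $(s_1,i_1)\mapsto s_1 i_1$ and $(s_2,i_2)\mapsto s_2(x,y)\int_{\R^2} i_2(x,\widehat y)d\widehat y$ are locally Lipschitz; for the first this is the standard product estimate in $\BUC(\R^2)$, and for the second one uses Lemma~\ref{LE7.1} to control $x\mapsto \int_{\R^2} i_2(x,\widehat y)d\widehat y$ in $\BUC(\R^2)$ by $\|i_2\|_{Y_1}$ and then estimates the $Y_1$ and $Y_2$ norms of the product; (iii) the subtangential/positivity condition: for each $M>0$ there is $\lambda(M)$ with $\mathbb F(W)+\lambda(M)W\in\mathbb X_+$ for all $W\in\mathbb X_+$, $\|W\|_{\mathbb X}\le M$ — this holds because the negative terms $-\kappa_1 s_1 i_1$ and $-\kappa_2 s_2\int i_2$ are each bounded below by $-CM$ times the corresponding coordinate on the ball of radius $M$.

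Having these, the abstract results of Magal--Ruan \cite{MR09, MR18} immediately yield a maximal strongly continuous semiflow $U$ on $\mathbb X_+$ defined on a maximal interval $[0,\tau(W_0))$, together with the mild-solution variation-of-constants formula
$$
U(t)x=T_{\A+\LL}(t)x+\int_0^t T_{\A+\LL}(t-s)\,\F(U(s)x)\,ds,\quad 0\le t<\tau(x),
$$
and positive invariance of $\mathbb X_+$. It then remains to upgrade $\tau(x)=\infty$ for every $x\in\mathbb X_+$, for which I would follow exactly the a priori argument already outlined in the text: adding the $s_1$- and $i_1$-equations (the latter after discarding the good sign $-\nu_1 i_1$) and the $x$-integrated $s_2$-, $i_2$-equations shows $h(t,y):=s_1+i_1+\int_{\R^2}(s_2+i_2)dx$ is non-increasing in $t$ on $[0,\tau)$, hence bounded by $M:=\|(s_{10},s_{20})\|_X+\|(i_{10},i_{20})\|_X$; positivity then gives the pointwise bounds $0\le s_1,i_1\le M$ and $\int_{\R^2}(s_2+i_2)(t,x,y)dx\le M$. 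Feeding these into the Duhamel formula for $s_2$ (using \eqref{4.2} of Lemma~\ref{LE4.1} to evaluate the convolution of $K$ with $\rho$) bounds $\|s_2(t,\cdot)\|_{Y_1}+\|s_2(t,\cdot)\|_{Y_2}$ uniformly on $[0,\tau)$; the $i_2$-equation then has a source term $\kappa_2 s_2\int i_2$ that is linear in $i_2$ with uniformly bounded coefficient, so Gronwall gives at most exponential growth of $\|i_2(t,\cdot)\|_{Y_1\cap Y_2}$, ruling out finite-time blow-up. By the standard blow-up alternative in \cite{MR09, MR18}, $\tau(x)=\infty$.

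The main obstacle — the only point requiring genuine care rather than bookkeeping — is obtaining the uniform-in-time $Y_2$ bound on $s_2$ (and hence the nonlinear coefficient in the $i_2$-equation), because $Y_2$ is the Lagrangian-type space whose norm is $\int_{\R^2}\sup_{y}|v(x+y,y)|dx$ and the source term $\gamma\rho(x-y)s_1(t,y)$ must be estimated in that norm; here one uses, as in the proof of the boundedness of $B$, that $\|\rho(\cdot-y)u(y)\|_{Y_2}=\sup_y|u(y)|$ together with $\sup_y s_1(t,y)\le M$, and that $T_{\A+\LL}(t)$ is a contraction on $Y_2$ by Lemma~\ref{LE6.4}, so the Duhamel integral is bounded by $\gamma M\int_0^t e^{-\alpha(t-s)}ds\le \gamma M/\alpha$. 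Once this uniform bound is in hand the rest is a direct transcription of the arguments already used in Sections~6 and~7, and the theorem follows.

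\begin{proof}
We check the hypotheses of the abstract semilinear theory of \cite{MR09, MR18}. The operator $\mathbb A$ is, up to the shifts $\gamma\rightsquigarrow\gamma+\nu_1$ and $\alpha\rightsquigarrow\alpha+\nu_2$ in the infectious block, a block-diagonal pair of copies of the generator $A$ of \eqref{6.5}, and $\mathbb L$ is, componentwise, the bounded operator $B$ of Section~6 (with its $\psi$-component set to zero). Hence, arguing as for Theorem~\ref{TH6.7} and using Lemma~\ref{LE7.1}, $\mathbb A+\mathbb L$ generates a strongly continuous semigroup $\{T_{\A+\LL}(t)\}_{t\ge0}$ on $\mathbb X$ which is positive with respect to the cone $\mathbb X_+=X_+\times X_+$.

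The map $\mathbb F$ is locally Lipschitz continuous on $\mathbb X$: the terms $s_1 i_1$ are controlled by the product estimate in $\BUC(\R^2)$, while for $s_2(x,y)\int_{\R^2}i_2(x,\widehat y)d\widehat y$ one uses Lemma~\ref{LE7.1} to bound $x\mapsto\int_{\R^2}i_2(x,\widehat y)d\widehat y$ in $\BUC(\R^2)$ by $\|i_2\|_{Y_1}$ and then estimates the $Y_1$- and $Y_2$-norms of the product. Moreover, for each $M>0$ there is $\lambda(M)\in\R$ such that $\mathbb F(W)+\lambda(M)W\in\mathbb X_+$ for all $W\in\mathbb X_+$ with $\|W\|_{\mathbb X}\le M$, since on such $W$ the negative contributions $-\kappa_1 s_1 i_1$ and $-\kappa_2 s_2\int_{\R^2}i_2\,d\widehat y$ are bounded below by $-CM$ times the corresponding coordinate. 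By \cite{MR09, MR18}, \eqref{8.5}--\eqref{8.6} thus generates a maximal strongly continuous semiflow $U$ on $\mathbb X_+$, defined on $[0,\tau(x))$ with $\tau(x)\in(0,\infty]$, satisfying the stated variation-of-constants formula and leaving $\mathbb X_+$ positively invariant.

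It remains to prove $\tau(x)=\infty$. Fix $W(0)\in\mathbb X_+$ and let $W=((s_1,s_2),(i_1,i_2))^T$ be the maximal solution on $[0,\tau)$. Adding the equations in \eqref{8.1} and the $x$-integrals of \eqref{8.2}, and discarding the nonpositive terms $-\nu_1 i_1$, $-\nu_2\int i_2\,dx$, the function $h(t,y):=s_1(t,y)+i_1(t,y)+\int_{\R^2}(s_2+i_2)(t,x,y)dx$ satisfies $\partial_t h\le 0$, hence $0\le s_1,i_1\le M$ and $\int_{\R^2}(s_2+i_2)(t,x,y)dx\le M$ on $[0,\tau)$, where $M:=\|(s_{10},s_{20})^T\|_X+\|(i_{10},i_{20})^T\|_X$. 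From the Duhamel formula for $s_2$, the positivity of the solution, Lemma~\ref{LE4.1}, and the fact that $T_{\A+\LL}(t)$ is a contraction on $Y_1$ and on $Y_2$ (Lemmas~\ref{LE6.3}--\ref{LE6.4}), together with $\|\rho(\cdot-y)s_1(t,y)\|_{Y_2}=\sup_y|s_1(t,y)|\le M$, we get
$$
\|s_2(t,\cdot)\|_{Y_1}+\|s_2(t,\cdot)\|_{Y_2}\le \|s_{20}\|_{Y_1\cap Y_2}+\frac{2\gamma M}{\alpha},\qquad 0\le t<\tau.
$$
Consequently the source term $\kappa_2 s_2\int_{\R^2}i_2\,d\widehat y$ in the $i_2$-equation is linear in $i_2$ with a coefficient that is uniformly bounded on $[0,\tau)$ (using Lemma~\ref{LE7.1} for the inner integral), so Gronwall's inequality applied to the mild formulation for $(i_1,i_2)$ yields $\|(i_1(t,\cdot),i_2(t,\cdot))\|_X\le C_1 e^{C_2 t}$ on $[0,\tau)$. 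Thus $\|W(t)\|_{\mathbb X}$ cannot blow up in finite time, and the blow-up alternative of \cite{MR09, MR18} forces $\tau=\infty$. The semiflow is therefore globally defined on $\mathbb X_+$, which completes the proof.
\end{proof}
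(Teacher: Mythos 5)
Your proposal is correct and follows essentially the same route as the paper: local Lipschitz continuity of $\mathbb F$ plus the quasi-positivity condition $\mathbb F(W)+\lambda(M)W\in\mathbb X_+$ and the positivity of $T_{\A+\LL}$ give a maximal positive semiflow via Magal--Ruan, and global existence is obtained exactly as in the text preceding the theorem (the conservation-type bound on $s_1+i_1+\int(s_2+i_2)dx$, the Duhamel estimate giving a uniform $Y_1\cap Y_2$ bound on $s_2$, and the resulting sublinear $i_2$-equation ruling out finite-time blow-up). The only quibble is descriptive: the paper's $\mathbb A$ already contains the coupling terms $\gamma\rho(x-y)s_1(y)$ and $\gamma\rho(x-y)i_1(y)$, so it is not literally a block-diagonal pair of copies of $A$; rather $\mathbb A+\mathbb L$ is componentwise $A+B$ with the extra decay rates $\nu_1,\nu_2$, which is what your argument actually uses, so the conclusion is unaffected.
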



\begin{thebibliography}{99}



	\bibitem{BHG} 	Brockmann, D., Hufnagel, L., \& Geisel, T. (2006). The scaling laws of human travel. Nature, 439(7075), 462-465.
	
\bibitem{Cantrell} Cantrell, R. S., and Cosner, C. (2004). Spatial ecology via reaction-diffusion equations. John Wiley \& Sons. 

\bibitem{Cantrell-Cosner-Ruan} Cantrell, R. S., Cosner, C., \& Ruan, S. (Eds.). (2010). Spatial ecology. CRC Press. 
	
	
		\bibitem{CBCI} Cosner, C., Beier, J. C., Cantrell, R. S., Impoinvil, D., Kapitanski, L., Potts, M. D., ... \& Ruan, S. (2009). The effects of human movement on the persistence of vector-borne diseases. Journal of Theoretical Biology, 258(4), 550-560. 
		
	\bibitem{F}  Fisher, R.A. (1937). The wave of advance of advantageous genes. Annals of Eugenics 7(4), 355–369.
	
\bibitem{GHB} 	 Gonzalez, M. C., Hidalgo, C. A., \& Barabasi, A. L. (2008). Understanding individual human mobility patterns. Nature, 453(7196), 779-782.

\bibitem{KSZ}  Klafter, J., Shlesinger, M. F., \& Zumofen, G. (1996). Beyond brownian motion. Physics today, 49(2), 33-39.

\bibitem{KPP}	Kolmogorov, A.N., Petrovski, I.G., Piskunov, N.S. 	(1937). Étude de l’équation de	la diffusion avec croissance de la quantité de matière et son application à un problème biologique. Bull. Univ. Moskow, Ser. Internat., Sec. A 1, 1–25.



\bibitem{Murray} Murray, J. D. (2001). Mathematical biology II: spatial models and biomedical applications (Vol. 3). New York: Springer.

	
	
	\bibitem{Haase} M. Haase, The functional calculus for sectorial operators, Birkhäuser Basel (2006).
%

	\bibitem{MR09} Magal, P., \& Ruan, S. (2009). On semilinear Cauchy problems with non-dense domain. Advances in Differential Equations, 14(11/12), 1041-1084.


	\bibitem{MR18} Magal, P., \& Ruan, S. (2018). Theory and applications of abstract semilinear Cauchy problems. Springer International Publishing.
	
	\bibitem{MSW} 	Magal, P., Seydi, O., \& Wang, F. B. (2019). Monotone abstract non-densely defined Cauchy problems applied to age structured population dynamic models. Journal of Mathematical Analysis and Applications, 479(1), 450-481.
	
\bibitem{MWW1} 	P. Magal, G. F. Webb and Y. Wu (2019) An Environmental Model of Honey Bee Colony Collapse Due to Pesticide Contamination, Bulletin of Mathematical Biology , 81, 4908–4931. 	
	
\bibitem{MWW2} 	Magal, P., Webb, G. F., \& Wu, Y. (2020). A spatial model of honey bee colony collapse due to pesticide contamination of foraging bees. Journal of mathematical biology, 80, 2363-2393.


	
	\bibitem{MS}   Mantegna, R. N., \& Stanley, H. E. (1994). Stochastic process with ultraslow convergence to a Gaussian: the truncated Lévy flight. Physical Review Letters, 73(22), 2946.
	
	\bibitem{Perthame} Perthame, B. (2015). Parabolic equations in biology. In Parabolic Equations in Biology (pp. 1-21). Springer, Cham.
	
\bibitem{Roques} Roques, L. (2013). Modèles de réaction-diffusion pour l'écologie spatiale: Avec exercices dirigés. Editions Quae. 

	\bibitem{Ruan} S. Ruan (2007), Spatial-Temporal Dynamics in Nonlocal Epidemiological Models, in ``Mathematics for Life Science and Medicine'', Y. Takeuchi, K. Sato and Y. Iwasa (eds.), Springer-Verlag, Berlin, pp. 97-122 . 
	
	\bibitem{Ruan2017} Ruan, S. (2017). Spatiotemporal epidemic models for rabies among animals. Infectious disease modelling, 2(3), 277-287. 

	\bibitem{S1}  	Smith, H. L. (2008). Monotone dynamical systems: an introduction to the theory of competitive and cooperative systems: an introduction to the theory of competitive and cooperative systems (No. 41). American Mathematical Soc..
	
	\bibitem{S2}   Smith, H. L. (2017). Monotone dynamical systems: reflections on new advances \& applications. Discrete \& Continuous Dynamical Systems-A, 37(1), 485.
	
	
		\bibitem{Data} United States Census Bureau:  \url{https://www.census.gov/data/datasets/time-series/demo/popest/2010s-counties-total.html#par_textimage_70769902}
	


	
\end{thebibliography}
\end{document}